\def\be{\begin{equation}}
  \newcommand{\lab}[1]{\label{#1}}
  \newcommand{\setN}{\ensuremath{\mathbb{N}}}
  \newcommand{\setR}{\ensuremath{\mathbb{R}}}
  \newcommand{\Dcal}{\ensuremath{\mathcal{D}}}
  \newcommand{\eqdef}{:=}
  \newcommand{\set}[2][]{#1\{ {#2} #1\}}
  \newcommand{\card}[2][]{#1| #2 #1|}
  \newcommand{\ceil}[2][]{#1\lceil #2 #1\rceil}
  \newcommand{\st}{\colon}
  \newcommand{\prob}[2][]{\mathbb{P}#1(#2#1)}
\newcommand{\pr}{\mathbb{P}}
  \newcommand{\esp}[2][]{\mathbb{E}\,#1(#2#1)}
  \newcommand{\eps}{\varepsilon}
  \newcommand{\tc}{$2$-connected}
  \newcommand{\tcnm}{\ensuremath{T(n,m)}}
  \newcommand{\tcd}{\ensuremath{T(\ds)}}
  \newcommand{\cnm}{\ensuremath{C(n,m)}}
  \newcommand{\tec}{$2$-edge-connected}
  \newcommand{\aas}{a.a.s.}
  \newcommand{\Aas}{A.a.s.}
  \newcommand{\ds}{\mathbf{d}}
  \newcommand{\Ys}{\mathbf{Y}}
  \newcommand{\tcs}{\textbf{2cs}}
  \newcommand{\tpoisson}[3][]{\mathop{\rm Po}#1(#2, #3 #1)}
  \newcommand{\poisson}[2][]{\mathop{\rm Po}#1(#2 #1)}
  \newtheorem{thm}{Theorem}
  \newtheorem{lem}[thm]{Lemma}
  \newtheorem{prop}[thm]{Proposition}
  \newtheorem{cor}[thm]{Corollary}
  \newlength{\topsepBackup}
  \newlength{\mathindent}
  \newlength{\halfmathindent}
  \newlength{\lassertion}
  \newcounter{countercl}
  \newcounter{mylistcount}
    {%
    \end{list}%
    \smallskip
  }
  \newlength{\classertion}
  \newcommand{\myreference}{}
  \newcommand{\Luczak}{\L uczak}
   \newcommand{\graeme}[1]
  {{\bf [ Graeme: } {\em #1}{\bf ]}}
   \newcommand{\cris}[1]
  {{\bf [ Cris: } {\em #1}{\bf ]}}
  \newcommand{\crisa}[1]
  {{\bf [ Cris 4 Oct: } {\em #1}{\bf ]}}
  \newcommand{\crisb}[1]
  {{\bf [ Cris 5 Oct: } {\em #1}{\bf ]}}
  \newcommand{\crisc}[1]
  {{\bf [ Cris 7 Oct: } {\em #1}{\bf ]}}
  \newcommand{\crisd}[1]
  {{\bf [ Cris 9 Oct: } {\em #1}{\bf ]}}
  \newcommand{\crise}[1]
  {{\bf [ Cris 11 Oct: } {\em #1}{\bf ]}}
\newcommand{\nicka}[1]
  {{\bf [ Nick 4 Sept: } {\em #1}{\bf ]}}
  \newcommand{\nickb}[1]
  {{\bf [ Correction } {\em #1}{\bf ]}}
\renewcommand{\cris}[1]{}
  \renewcommand{\crisa}[1]{}
 \renewcommand{\crisb}[1]{}
 \renewcommand{\graeme}[1]{}
\renewcommand{\nicka}[1]{}
\renewcommand{\nickb}[1]{}
 \renewcommand{\nickc}[1]{}
\renewcommand{\nickd}[1]{}
\renewcommand{\nickcc}[1]{}
\renewcommand{\nicke}[1]{}
 \renewcommand{\nickf}[1]{}
  \newcommand{\eqn}[1]{(\ref{#1})}
  \newcommand{\bel}[1]{\be\lab{#1}}
  \def\ee{\end{equation}}
\title{Asymptotic enumeration of sparse 2-connected graphs}
\author{Graeme Kemkes\thanks{This author's research was primarily carried out while at the University of California at San Diego under an NSERC postdoctoral award.}\\
Dept.\ of Mathematics\\
Ryerson University\\
Toronto, ON\\
Canada, M5B 2K3\\ 
\small{E-mail: {\tt gdkemkes@ryerson.ca} }
\and
Cristiane M.~Sato\\
Dept.\ of Combinatorics and Optimization\\
University of Waterloo\\
Waterloo ON\\
Canada N2L 3G1\\
\small{E-mail: {\tt  cmsato@gmail.com}}
\and
Nicholas Wormald\thanks{Supported by the  Canada Research Chairs Program and NSERC.}
\\
Dept.\ of Combinatorics and Optimization\\
University of Waterloo\\
Waterloo ON\\
Canada N2L 3G1\\
\small{E-mail: {\tt   nwormald@uwaterloo.ca}}
}
\date{}
\begin{document}

\maketitle

\begin{abstract}
  We determine an asymptotic formula for the number of labelled
  2-connected  (simple) graphs on $n$ vertices and $m$ edges,
  provided that $m-n\to\infty$ and $m=O(n\log n)$ as
  $n\to\infty$. This is the entire range of $m$ not covered by
  previous results.  The proof involves determining properties of the
  core and kernel of random graphs with minimum degree at least 2. The case of 2-edge-connectedness is treated similarly. We
  also obtain formulae for the number of 2-connected graphs with given
  degree sequence for most (`typical') sequences.  Our main result
  solves a problem of Wright from 1983.
 \end{abstract}


\section{Introduction}
Counting graphs with a given property is a fundamental and often
difficult problem.  G.E. Uhlenbeck, in the Gibbs Lecture at an
American Mathematical Society meeting in 1950, cited the enumeration
of 2-connected graphs as one of the unsolved problems in statistical
mechanics. In the ensuing years, ways were found to efficiently
calculate the number of such graphs with a given number of vertices,
or vertices and edges (see Harary and Palmer~\cite{HP} for
example). However, no very simple formula was found, which brings up
the question of asymptotic formulae. In the two-parameter case, there
are some ranges of the parameters for which such a formula is
unknown. This is the subject of the present paper.

Call a  (simple) graph on the vertex set $[n]=\set{1,\ldots, n}$
with $m$ edges an $(n, m)$-graph. (Thus, we are concerned with
labelled graphs.) A well-studied problem is to count $(n,m)$-graphs
with minimum degree at least some fixed number, $k$.
Korshunov~\cite{Korshunov}, and Bender, Canfield and McKay~\cite{BCMb}
provided asymptotic formulae for the case $k=1$, that is, graphs with
no isolated vertices. The case $k=2$, which is relevant for
2-connected graphs, was studied by Wright~\cite{Wright78} and others
such as Ravelomanana and Thimonier~\cite{RT}. Later, Pittel and
Wormald~\cite{PWa} found asymptotic formulae for the case $k\geq 2$.

A number of authors have addressed the problem of counting connected
$(n,m)$-graphs. After results by various authors for various ranges of
$m$ with various degrees of approximation, Bender, Canfield and
McKay~\cite{BCMa} provided an asymptotic formula for the number
whenever $m-n\to\infty$ as $n\to \infty$. They obtained this formula
by studying a differential equation related to a recurrence relation
for the number of connected graphs. Pittel and Wormald~\cite{PWb}
provided a somewhat simpler proof for this formula, with an improved
error term for some ranges of $m$.

A natural next step would be to count $k$-connected
$(n,m)$-graphs. This problems turns out to be essentially already
solved for $k\geq 3$. \Luczak~\cite{Luczak} showed that a random graph
with given degree sequence, all degrees between 3 and $d$, a.a.s.\ has
connectivity equal to minimum degree. As observed in the introduction
of~\cite{PWa}, this implies that, for $m=O(n\log n)$, a random
$(n,m)$-graph with minimum degree $k\geq 3$ is \aas{}
$k$-connected. (To deduce this, one needs to know that such a random
$(n,m)$-graph has no large degree vertices, which can be deduced from
the results of~\cite{PWa}, or alternatively by a more direct argument
if $m/n$ is bounded.)  Thus, using the above-mentioned result
from~\cite{PWa}, one immediately obtains an asymptotic formula for
$k$-connected $(n,m)$-graphs. However, this argument does not apply
for $2$-connected graphs.

In this article, we derive an asymptotic formula for the number of
2-connected $(n, m)$-graphs when $m-n\to\infty$ with $m = O(n\log
n)$. Above this range, for any fixed $k$, it is well known that almost
all graphs are $k$-connected. This follows by the classic result of  Erd{\H o}s and R{\'e}nyi~\cite{ER}, that  for fixed $k\ge 0$ and $m=m(n)= {1 \over 2} n(\log n + k
\log \log n+x+o(1))$,  
$$
\pr( {\cal G}(n,m)\mbox{ is $k$-connected})\to 1-e^{-e^{-x}/k!},
$$ 
where ${\cal G}(n,m)$ denotes an $(n,m)$-graph chosen uniformly at random.
Wright~\cite{Wright83} found an asymptotic formula for the case $m-n = o(\sqrt{n})$ with
$m-n\to\infty$, and it was noted that the problem of finding a formula
for $m-n$ growing faster than $\sqrt{n}$ seems difficult. We solve
this problem here. 

Regarding exact enumeration, Temperley~\cite{Temp} proved a recurrence
relation for the number of $2$-connected $(n,m)$-graphs. His proof
uses calculus to deduce the recurrence formula from a well known
differential equation for the generating functions of the number of
connected graphs and $2$-connected graphs; see~\cite{HP} (pp.~10, 11).
For a combinatorial proof, see~\cite{WW}. Wright~\cite{Wright78} also
found an exact formula for the number $2$-connected $(n,n+k)$-graphs
with fixed $k$.  It is possible that following an approach close to
the one in~\cite{BCMa}, one could obtain an asymptotic formula for the
\tc{} $(n,m)$-graphs.  However, we believe that this would be very
difficult since the proof in~\cite{BCMa} is not simple and the
recurrence relation for \tc{} graphs is more complicated than the one
for connected graphs.

The $k$-core of a graph $G$ is a maximal subgraph of $G$ with
minimum degree at least $k$.  We extend this definition and simply
call a graph a {\em $k$-core} if it has minimum degree at least $k$.

Since $2$-connected graphs must have minimum degree at least $2$ (if
they have more than two vertices), we work with $2$-cores. Our
approach uses some of the basic ideas in~\cite{PWb} where the
(sub-)problem being addressed was asymptotic enumeration of connected
2-cores with a given number of edges and vertices. One possible plan
can be described as follows, and could potentially be of use for any
graph property.  First, compute the probability that a random graph
with given degree sequence is 2-connected, where the degree sequence
is chosen randomly, the degrees being independent truncated Poisson
random variables, conditioned on the sum being $2m$. (Truncated means
conditioning on the value being at least 2.) Next, using the results
in~\cite{PWa} we can try to deduce that the same probability of
connectedness holds for a random 2-core. In that paper, it is
essentially shown, starting with the basic enumeration results of
McKay~\cite{McKay}, that the degree sequence of a random $(n,m)$-graph
which is a 2-core is strongly related to a sequence of independent
truncated Poisson random variables, conditioned on the sum being $2m$
(see~\cite[Equation (13)]{PWa} for example).  Note that, in the first
step, we do not need to estimate the probability for some degree
sequences (e.g., if the maximum degree is too high) as long as we show
that they have very low probability of occurring as the degree
sequence of a random $(n,m)$-graph with minimum degree at least~$2$.

This plan works quite well provided $m/n\to\infty$, in which case we use another result of \Luczak~\cite{Luczak}  to show that the probability of 2-connectedness tends to 1 in the Poisson-based model. For this we need to use the pairing model, a common model used for analysing random graphs with given degrees. However, if $m/n \to 1$, a random 2-core tends to have many isolated cycles, so the probability of 2-connectedness tends to 0 and then the   plan is difficult to carry out. For such $m$, and, for convenience, whenever $m$ is bounded, we use a construction in~\cite{PWb} called the kernel configuration model, a modification of the pairing model. This is a probability space enabling direct analysis of the 2-cores that have no isolated cycles, and the above plan is readily adapted to using this model. 

The  models mentioned above are explained
in Section~\ref{sec:prelim}. 

Combining the results obtained for degree sequences we obtain
asymptotic formulae for the number of  \tc{} $(n, m)$-graphs for the following three cases: $m/n
\to 1$, $m/n$ bounded away from $1$ and $m/n \to \infty$. We then
combine all three cases into a single formula
(Theorem~\ref{thm:main}). The pieces of the proof of this are finally gathered together in Section~\ref{s:together}. The final section adapts the method to counting 2-edge-connected graphs.

\section{Main results}

We assume that $m>n$. Let $\tcnm$ denote the number of labelled \tc{}
 (simple) graphs with $n$ vertices and $m$ edges. We may
assume that the vertex set is $[n]$.

In preparation for the statement of our results we define the odd
falling factorial $(2m-1)!! \eqdef (2m-1)(2m-3)\cdots 1$, and the
average degree  $c \eqdef
2m/n$.

Define $g:\setR_{++} \to \setR$ by $g(\lambda) \eqdef
\lambda(e^{\lambda}-1)/(e^{\lambda}-1-\lambda)$. Then $g$ is an
increasing function with $g(\lambda) \to 2$ as $\lambda\to 0$. Since
$c>2$, we may let $\lambda_c$ be the (unique) positive root of
\begin{equation*}
  g(\lambda) = \frac{\lambda(e^{\lambda}-1)}{e^{\lambda}-1-\lambda} = c,
\end{equation*}
and we set
\begin{equation*}
  \bar{\eta}_c\eqdef \frac{\lambda_c e^{\lambda_c}}{e^{\lambda_c}-1}
  \quad\text{and}\quad
  p_c\eqdef\frac{\lambda_c^2}{2\left(e^{\lambda_c}-1-\lambda_c\right)}.
\end{equation*}

Our main result is the following.
\begin{thm}
  \lab{thm:main}
  Suppose $m = O(n\log n)$ and $ m- n \to \infty$. Then
  \[ \tcnm \sim (2m-1)!!
  \frac{(\exp(\lambda_c)-1-\lambda_c)^n}
  {\lambda_c^{2m}\sqrt{2\pi n c(1 + \bar{\eta}_c-c)}} 
  \sqrt{\frac{c-2p_c}{c}} 
  \exp\left(-\frac{c}{2} -\frac{\lambda_c^2}{4} \right).
  \]
\end{thm}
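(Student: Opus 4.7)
The plan is to write $\tcnm = N(n,m)\cdot p(n,m)$, where $N(n,m)$ denotes the number of $(n,m)$-graphs with minimum degree at least $2$ and $p(n,m)$ is the probability that a uniformly random such graph is \tc{}. The factor $N(n,m)$ will be imported from Pittel and Wormald~\cite{PWa}, whose formula is valid throughout the range $m-n\to\infty$, $m=O(n\log n)$; from the same work I also use the fact that the degree sequence of a uniformly random $(n,m)$-graph with minimum degree $\ge 2$ is essentially a sequence of $n$ i.i.d.\ truncated Poisson random variables with parameter $\lambda_c$ conditioned on sum $2m$. The parameters $\lambda_c$, $\bar{\eta}_c$, $p_c$ appearing in the theorem are the natural functionals of this distribution (chosen so the mean equals $c$; a corrected second moment; and the probability of the value $2$, respectively). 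After inserting the Pittel--Wormald formula, the target reduces to
\[
p(n,m) \sim \sqrt{\frac{c-2p_c}{c}}\,\exp\!\left(-\frac{\lambda_c^2}{4}\right).
\]

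I would then split into three regimes. When $m/n\to\infty$, conditioning on the degree sequence and working in the pairing model, I invoke \Luczak{}~\cite{Luczak}: a random graph with bounded maximum degree and minimum degree at least $3$ is \aas{} as connected as its minimum degree permits. Since $c\to\infty$, \aas{} all but a vanishing fraction of vertices have degree $\ge 3$, and the few degree-$2$ vertices lie in short chains that do not obstruct 2-connectedness, so $p(n,m)\to 1$. A direct computation confirms that $\sqrt{(c-2p_c)/c}\exp(-\lambda_c^2/4)\to 1$ as $\lambda_c\to\infty$, matching the target.

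When $m/n$ is bounded and bounded away from $1$, the parameter $\lambda_c$ stays in a compact subset of $(0,\infty)$ and $\Theta(n)$ vertices have degree exactly $2$. Now a 2-core is \tc{} essentially iff the multigraph obtained by suppressing degree-$2$ vertices is \tec{} and loop-free. Still in the pairing model with the truncated Poisson degree sequence, I evaluate this probability directly: a Poisson limit for short cycles consisting entirely of degree-$2$ vertices contributes the factor $\exp(-\lambda_c^2/4)$, while a local-limit correction for the number of vertices of degree exactly $2$ contributes the factor $\sqrt{(c-2p_c)/c}$.

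The regime $m/n\to 1$ will be the main obstacle. Here isolated cycles of degree-$2$ vertices appear in a random 2-core with non-negligible probability, so the pairing model is inadequate. Instead I would use the kernel configuration model of~\cite{PWb}, in which a 2-core without isolated cycles is assembled from its \emph{kernel} (the induced subgraph on vertices of degree $\ge 3$) by subdividing each kernel edge with a possibly empty chain of degree-$2$ vertices. In this sparse regime the kernel has only $\Theta(m-n)=o(n)$ vertices, and 2-connectedness of the 2-core translates to a structural condition on the kernel multigraph (no cut vertex, no cut edge, no loop) that is amenable to Poisson limits for small subgraph counts. The same limiting factor $\sqrt{(c-2p_c)/c}\exp(-\lambda_c^2/4)$ will emerge, and the three regimes will be glued together in Section~\ref{s:together} by checking that the regime-specific asymptotics agree on their overlaps, yielding the single formula of Theorem~\ref{thm:main}.
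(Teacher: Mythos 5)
Your overall architecture (three regimes split by the behaviour of $c=2m/n$, pairing model plus \Luczak{} for $c\to\infty$, kernel configuration model and Poisson limits for the sparser regimes, then gluing) matches the paper's, but the reduction you perform at the outset contains a genuine error that propagates through the rest of the argument. Writing $\tcnm=\cnm\cdot p(n,m)$ and inserting the Pittel--Wormald formula $\cnm\sim(2m-1)!!\,Q(n,m)\exp(-\bar\eta_c/2-\bar\eta_c^2/4)$ does \emph{not} reduce the theorem to $p(n,m)\sim\sqrt{(c-2p_c)/c}\,\exp(-\lambda_c^2/4)$; it reduces it to
\begin{equation*}
p(n,m)\sim\sqrt{\frac{c-2p_c}{c}}\,\exp\left(-\frac{c}{2}-\frac{\lambda_c^2}{4}+\frac{\bar\eta_c}{2}+\frac{\bar\eta_c^2}{4}\right),
\end{equation*}
and the extra factor is not asymptotically $1$ in general: as $c\to 2$ one has $\lambda_c\to 0$ and $\bar\eta_c\to 1$, so the added exponent tends to $-1/4\neq 0$. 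Your own consistency check exposes the problem: you assert that $\sqrt{(c-2p_c)/c}\,\exp(-\lambda_c^2/4)\to 1$ as $\lambda_c\to\infty$, but $\exp(-\lambda_c^2/4)\to 0$ there. What actually tends to $1$ in that regime is the corrected expression above, because $\bar\eta_c-c=O(\lambda_c^2e^{-\lambda_c})$ and $\bar\eta_c^2-\lambda_c^2=O(\lambda_c^2e^{-\lambda_c})$. The paper avoids this bookkeeping trap by never dividing out the simplicity probability: it works directly with $\tcnm=(2m-1)!!\,Q(n,m)\,\esp{U'(\Ys)\mid\Sigma}$ and its kernel-configuration analogue, where $U'$ is the probability of being simple \emph{and} \tc{} simultaneously.

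A second, related misattribution occurs in your bounded-$c$ regime. The factor $\exp(-\lambda_c^2/4)$ does not come from cycles consisting entirely of degree-$2$ vertices (such isolated cycles are excluded automatically once one passes to pre-kernels, and a \tc{} graph with $m>n$ has none); it is the Poisson probability that the kernel has no double edge both of whose strands receive no degree-$2$ vertices, i.e.\ that $G$ has no multiple edges. The factor $\exp(-c/2)$, entirely absent from your accounting, is the Poisson probability that the kernel has no loops (a kernel loop forces a cut vertex no matter how many degree-$2$ vertices are placed on it), and $\sqrt{(c-2p_c)/c}$ arises as $\sqrt{m'/m}$ from Stirling's formula in the kernel-configuration counting identity, not as a local-limit correction for $D_2$. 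These are fixable misidentifications, but as written your second regime would not produce the correct constant, and the first-regime verification fails outright until the simplicity factor is restored.
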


To prove this, we first obtain asymptotic formulae for $\tcnm$ for the
following three cases: $c \to 2$, bounded $c > 2$, and $c\to \infty$.
\begin{thm}
  \lab{thm:maincases}
  Suppose $m = O(n\log n)$ and $r\eqdef 2m-2n \to \infty$. Then
  \begin{enumerate}
  \item[(a)] if $c \to 2$,
    \[ \tcnm \sim (2m-1)!!
    \frac{(\exp(\lambda_c)-1-\lambda_c)^n}
    {\lambda_c^{2m}\sqrt{2\pi n (c-2)}} 
   \cdot \frac{\sqrt{3r}}{e \sqrt{ 2m}}.
    \]

  \item[(b)] if $c = O(1)$ and $c > C_0$ for a constant $C_0> 2$ for $n$
    large enough, 
    \[ \tcnm \sim (2m-1)!!
    \frac{(\exp(\lambda_c)-1-\lambda_c)^n} {\lambda_c^{2m}\sqrt{2\pi n
        c(1 + \bar{\eta}_c-c)}} \sqrt{\frac{c-2p_c}{c}} \exp\left(
      -\frac{c}{2} - \frac{\lambda_c^2}{4} \right).
    \]
    
  \item[(c)] if $c \to \infty$,
    \[ \tcnm \sim (2m-1)!! 
    \frac{(\exp(\lambda_c)-1-\lambda_c)^n}
    {\lambda_c^{2m}\sqrt{2\pi n c}} 
    \exp\left( -\frac{\bar{\eta}_c}{2}
      -\frac{\bar{\eta}_c^2}{4} \right).
    \]
  \end{enumerate}
\end{thm}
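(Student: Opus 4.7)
The plan is to write $\tcnm = \cnm \cdot p_{2c}(n,m)$, where $\cnm$ counts $(n,m)$-graphs with minimum degree at least $2$ and $p_{2c}(n,m)$ is the probability that a uniformly random such graph is \tc{}. The factor $\cnm$ already has an asymptotic formula in the three regimes that matches the bulk of the expressions in Theorem~\ref{thm:maincases}, due to~\cite{PWa}, so the real task is to estimate $p_{2c}(n,m)$ up to a multiplicative $(1+o(1))$. I would do this by conditioning on the degree sequence $\ds$: by~\cite{PWa}, a uniformly random $2$-core has its degree sequence concentrated on sequences that look like $n$ i.i.d.\ truncated-Poisson variables of parameter $\lambda_c$ conditioned on sum $2m$. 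For each typical $\ds$ I would compute the probability that the pairing model (or the kernel configuration model) on $\ds$ produces a simple \tc{} graph, and then average over $\ds$ via a local limit theorem for the Poisson sum; the denominators $\sqrt{2\pi n c(1+\bar{\eta}_c-c)}$, $\sqrt{2\pi n(c-2)}$, $\sqrt{2\pi n c}$ in the three cases are precisely what the local limit theorem produces for the variance of such a truncated-Poisson in the three regimes.

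For case~(c), $c\to\infty$, here $\lambda_c\to\infty$ so the fraction of degree-$2$ vertices is $o(1)$; after peeling off the few degree-$2$ vertices (which just subdivide edges among higher-degree vertices), \Luczak's theorem that random graphs with given degree sequence having minimum degree $k\geq 3$ are \aas{} $k$-connected gives $p_{2c}\to 1$. Case~(c) then reduces to the $\cnm$ estimate, and the factor $\exp(-\bar{\eta}_c/2 - \bar{\eta}_c^2/4)$ comes out of the probability that a typical pairing produces a simple graph, via the standard short-cycle Poisson limits.

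For case~(b), bounded $c>2$, $p_{2c}$ is bounded strictly between $0$ and $1$, and I would compute it in the pairing model. The obstructions to \tc{}ness (loops, multiple edges, isolated $2$-regular cycles, cut vertices of constant size) have jointly Poisson limits with means that are explicit functions of $\lambda_c$, and exponentiating those means produces the correction $\exp(-c/2-\lambda_c^2/4)$. The factor $\sqrt{(c-2p_c)/c}$ arises from reweighting the truncated-Poisson local limit theorem by the event ``no isolated $2$-regular component'': because the fraction of vertices of degree exactly $2$ in the truncated-Poisson model is $p_c$, deleting the isolated cycles shifts the effective mean of the degree-sum by roughly $2p_c n$ and rescales the central-limit density by the square root of the indicated variance ratio.

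Case~(a), $c\to 2$, is the main obstacle: here $p_{2c}\to 0$ because the typical $2$-core is dominated by isolated cycles, and the pairing-model calculation becomes too lossy. I would switch to the kernel configuration model of~\cite{PWb}, which parametrises $2$-cores with no isolated cycle by a kernel of minimum degree $\geq 3$ together with a random subdivision of its edges. Since $r=2m-2n\to\infty$, the kernel has $\Theta(r)$ edges, and elementary estimates show that a uniformly random sparse multigraph of minimum degree $\geq 3$ is \aas{} \tc{}; combining this with the Poisson probability that the $2$-core has no isolated cycle (of order $\exp(-\text{const}\cdot\log(n/r))$ times polynomial factors) yields the factor $\sqrt{3r}/(e\sqrt{2m})$. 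Throughout all three cases I also have to check that the contribution from atypical $\ds$ — particularly sequences with unusually large maximum degree — is negligible despite $p_{2c}$ possibly being small, which is where the tail bounds on the degree sequence already proven in~\cite{PWa} are indispensable.
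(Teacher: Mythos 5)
Your overall architecture (condition on the degree sequence, treat typical sequences via the pairing or kernel configuration model, control atypical sequences by tail bounds from~\cite{PWa}) is the same as the paper's, and your case~(c) is essentially right. But case~(a) contains a genuine error that would produce the wrong constant. You assert that ``elementary estimates show that a uniformly random sparse multigraph of minimum degree $\geq 3$ is \aas{} \tc{}'' and then attribute the entire factor $\sqrt{3r}/(e\sqrt{2m})$ to the probability of having no isolated cycle. The kernel here is a random \emph{multigraph with loops} on $\Theta(r)$ vertices, almost all of degree $3$, and a loop at a degree-$3$ vertex leaves that vertex incident to a single further edge, which is a bridge; the expected number of such loops is $3D_3/(2M)\to 1$, so the kernel fails to be \tc{} with probability tending to $1-e^{-1}$. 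The factor $1/e$ in the formula of part~(a) is precisely $\lim\prob{\tcs(\ds)}$, obtained in the paper from \Luczak's Lemma~12.1(iii) (probability of \tec ness $\to\exp(-\tfrac32 D_3/m')$ with $D_3/m'\to 2/3$) combined with Proposition~\ref{prop:kernelequiv} equating \tc{} and \tec{} \aas{} Your sketch, which takes this conditional probability to be $1+o(1)$, is off by a factor of $e$, and there is no way the missing $e$ can be recovered from the isolated-cycle count, since the kernel configuration model already parametrises exactly the $2$-cores with no isolated cycle and the $1/e$ survives after that conditioning.

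A secondary misattribution occurs in case~(b): the factor $\sqrt{(c-2p_c)/c}$ is not a rescaling of the local limit density by conditioning on ``no isolated $2$-regular component''; the density $1/\sqrt{2\pi nc(1+\bar\eta_c-c)}$ is not rescaled at all. That factor is simply $\sqrt{m'/m}$ with $m'=m-D_2$, arising from Stirling's approximation applied to the exact kernel-configuration-model count $(2m'-1)!!\,(m-1)!/\bigl((m'-1)!\prod_{i\in R}d_i!\bigr)$ when it is rewritten over $(2m-1)!!/\prod_i d_i!$ as in~\eqn{eqn:kcmd}. The exponential correction $\exp(-c/2-\lambda_c^2/4)$ is correct in value, but the events whose Poisson limit produces it are specifically loops of the kernel (mean $c/2$) and double edges of $G$ (mean $\lambda_c^2/4$), established by the method of moments; cut-vertices and isolated cycles do not enter that computation.
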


For each case, we prove that the formula obtained is asymptotically
equivalent to the formula in Theorem~\ref{thm:main}. Then we show how
to combine these to obtain Theorem~\ref{thm:main}.

As we have already mentioned, Wright~\cite{Wright83} proved an
asymptotic formula for the case $k \eqdef m-n = o(\sqrt{n})$ with
$k\to\infty$. His formula is
\begin{equation*}
  \tcnm
  =
  a \sqrt{6\pi}  n^{n+3k-1/2} e^{2k-n} (18 k^2)^{-k} (1+ O(k^{-1})+O(k^2/n)),
\end{equation*}
where $a$ is a constant. Wright gave a method of estimating $a$, and
computed it to be $0.058549831\ldots$. Vobly{\u\i}~\cite{Vob87}
determined that $a$ is $1/(2e\pi)$.  To compare Wright's formula to
our own, we compute $\lambda_c =3 r/n - \frac{3}{2}(r/n)^2
+\frac{6}{5}(r/n)^3 + O((r/n)^4)$ for $k=o(n^{2/3})$, and then from
Theorem~\ref{thm:maincases}(c) it is easy to obtain the following.
\begin{cor}\label{cor4}
  Suppose that $k = m- n =o(n^{2/3})$ and $k\to \infty$. Then
  \begin{equation*}
    \tcnm
    \sim
    \frac{\sqrt{3}}{e \sqrt{ 2\pi}}
    n^{n+3k-1/2} e^{2k-n+3k^2/(2n)}(18k^2)^{-k}.
  \end{equation*}
\end{cor}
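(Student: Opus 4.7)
Since $k = o(n^{2/3})$ and $k \to \infty$, we have $r = 2k \to \infty$ and $c = 2+2k/n \to 2$, placing the problem in the regime of Theorem~\ref{thm:maincases}(a). The plan is to substitute the given expansion $\lambda_c = 3x - \tfrac{3}{2}x^2 + \tfrac{6}{5}x^3 + O(x^4)$ with $x = 2k/n$ into the case~(a) formula, apply Stirling to $(2m-1)!!$, and simplify. The assumption $k = o(n^{2/3})$ ensures $k^3/n^2 = o(1)$, which is precisely what is needed to absorb all higher-order error terms.

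The bulk of the work is to evaluate $n\ln(e^{\lambda_c}-1-\lambda_c) - 2m\ln\lambda_c$. The Taylor expansion $\ln(e^\lambda - 1 - \lambda) = 2\ln\lambda - \ln 2 + \lambda/3 + \lambda^2/36 + O(\lambda^3)$ reduces this to $-2k\ln\lambda_c - n\ln 2 + n\lambda_c/3 + n\lambda_c^2/36 + O(n\lambda_c^3)$. Writing $\lambda_c = (6k/n)\bigl(1 - k/n + O((k/n)^2)\bigr)$, I would then compute $-2k\ln\lambda_c = -2k\ln(6k/n) + 2k^2/n + o(1)$ and $n\lambda_c/3 + n\lambda_c^2/36 = 2k - k^2/n + o(1)$, combining to
\[
n\ln(e^{\lambda_c}-1-\lambda_c) - 2m\ln\lambda_c = -2k\ln(6k/n) - n\ln 2 + 2k + k^2/n + o(1).
\]

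Next, Stirling gives $\ln((2m-1)!!) = m\ln 2 + m\ln m - m + \tfrac12\ln 2 + o(1)$; the expansion $(n+k)\ln(n+k) = (n+k)\ln n + k + k^2/(2n) + o(1)$ then produces $\ln((2m-1)!!) = (n+k)\ln 2 + (n+k)\ln n + k^2/(2n) - n + \tfrac12\ln 2 + o(1)$. The remaining factor $(2\pi n(c-2))^{-1/2} \cdot \sqrt{3r}/(e\sqrt{2m})$ reduces, using $c-2 = 2k/n$, $r = 2k$, and $m \sim n$, to $\sqrt{3}/(e\sqrt{2\pi n})\bigl(1+o(1)\bigr)$.

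Summing all contributions, the coefficient of $\ln n$ is $n + 3k - \tfrac12$; the $k^2/n$ corrections (one each from $-2k\ln\lambda_c$, from $n\lambda_c/3 + n\lambda_c^2/36$, and from $(n+k)\ln(n+k)$) combine as $1 - 1 + \tfrac12 = \tfrac32$, giving $3k^2/(2n)$ in the exponent; and the grouping $-2k\ln k - 2k\ln 6 + k\ln 2 = -k\ln(18k^2)$ yields the power of $18k^2$. Exponentiating produces the stated formula. The only real obstacle is the bookkeeping: keeping only the leading-order $\lambda_c \sim 6k/n$ would miss the $3k^2/(2n)$ correction entirely, so one must carry the expansion of $\lambda_c$ (and of $(n+k)\ln(n+k)$) to second order in $k/n$ and verify that the three separate $k^2/n$ contributions combine to exactly the claimed value.
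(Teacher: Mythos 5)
Your derivation is correct and is precisely the computation the paper leaves to the reader (the paper's pointer to Theorem~\ref{thm:maincases}(c) is evidently a typo for case~(a), which is the applicable regime since $c\to 2$, and which you correctly use): substitute the stated expansion of $\lambda_c$ in $x=2k/n$, apply Stirling to $(2m-1)!!$, and carry all terms to order $k^2/n$, with $k=o(n^{2/3})$ disposing of the $O(k^3/n^2)$ remainders. Two cosmetic slips that cancel in your final answer are worth flagging: the factor $(2\pi n(c-2))^{-1/2}\sqrt{3r}/(e\sqrt{2m})$ actually reduces to $\sqrt{3}/(e\sqrt{4\pi n})$, the missing $\sqrt{2}$ being supplied by the $\tfrac12\ln 2$ in your Stirling expansion of the double factorial, and your tally of the $k^2/n$ coefficients should read $2-1+\tfrac12=\tfrac32$, consistent with the $+2k^2/n$ you correctly display for $-2k\ln\lambda_c$.
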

One can also check easily that for $m\approx n\log n$, our formula is asymptotic to the total number of $(n,m)$-graphs, in accordance with the result of Erd{\H o}s and R{\'e}nyi mentioned above.

 The proof of each case
in Theorem~\ref{thm:maincases} follows the same strategy. First we
study the general ``typical'' degree sequences of each case, computing
the (asymptotic) probability that a graph with a given degree sequence
is \tc. With this, we obtain an asymptotic formula for the number of
$2$-connected graphs with degree sequence $\ds$.  For all typical
sequences, we obtain the same probability (within a uniform
error). This allows us to sum over degree sequences obtaining an
asymptotic formula for {\tcnm} in each case.

We use $\Dcal(n,m)$ to represent the set of degree sequences
$\ds\eqdef (d_1,\dotsc, d_n)$ such that $\sum_{i=1}^n d_i = 2m$ and
$d_i \geq 2$ for all $i\in [n]$.    For
$\ds\in\Dcal(n,m)$ define $\eta(\ds) \eqdef \frac{1}{2m}\sum
d_j(d_j-1)$ and let $\tcd$ represent the number of \tc{} graphs with
degree sequence $\ds$. For every integer $j$, let $D_j = D_j(\ds)$
denote $\card{\set{i\st d_i = j}}$.

Define $\Ys=(Y_1,\dotsc,Y_n)$ to be a vector of independent 
\emph{truncated Poisson} random variables $Y\sim \tpoisson{2}{\lambda_c}$
defined by
\[
\prob{Y=j} =  \frac{\lambda_c^j}{j!( e^{\lambda_c} - 1 - \lambda_c )}
\]
for $j=2, 3, \dotsc$.

\begin{thm}
  \lab{thm:maindegrees}
  Suppose $m = O(n\log n)$ and $r\eqdef 2m-2n \to \infty$. Then
  \begin{enumerate}
  \item[(a)] Suppose further that $c \to 2$. Let $\psi(n) =
    r^{1-\eps}$ for some $\eps \in (0, 1/4)$. If $\ds \in \Dcal(n,m)$
    satisfies
    \begin{enumerate}
    \item[(i)] $|D_2 - \esp{D_2(\Ys)}| \leq \psi(n)$,
    \item[(ii)] $|D_3 -\esp{D_3(\Ys)}|\leq \psi(n)$,
    \item[(iii)] $|\sum_i \binom{d_i}{2} - \esp{\sum_i \binom{Y_i}{2}}| 
      \leq \psi(n)$,  
     \item[(iv)] $d_i \le 8\log(n-D_2(\ds))$ for every $i$,
    \end{enumerate}
    then
    \[ \tcd \sim  \frac{\sqrt{3r}}{e \sqrt{ 2m}}\cdot \frac{(2m-1)!!}{\prod_{j=1}^n d_j!}.
    \]
  \item[(b)] Suppose further that $c = O(1)$ and $c = c(n) = 2m/n >
    C_0$ for a constant $C_0> 2$ for $n$ large enough.  Let $\psi(n) =
    1/n^\eps$ for some $\eps \in (0,1/4)$. If $\ds \in \Dcal(n,m)$
    satisfies
    \begin{enumerate}
    \item[(i)] $d_i \le 6\log n$ for every $i$,
    \item[(ii)] $|\eta(\ds) - \bar \eta_c|\leq \psi(n)$ and
    \item[(iii)] $|D_2/n -p_c| \leq
      \psi(n)$, 
    \end{enumerate}
    then
    \[ \tcd \sim \frac{(2m-1)!!}{\prod_{j=1}^n d_j!}
    \sqrt{\frac{c-2p_c}{c}} \exp\left( -\frac{c}{2}
      -\frac{\lambda_c^2}{4} \right).
    \]
    
  \item[(c)] If $c \to \infty$ and $\ds \in \Dcal(n,m)$ satisfies $\max d_i
    \le n^\epsilon$ for some $\epsilon \in (0,0.01)$ then
    \[ \tcd \sim \frac{(2m-1)!!}{\prod_{j=1}^n d_j!} 
    \exp\left( -\frac{\eta(\ds)}{2}
      -\frac{\eta(\ds)^2}{4} \right).
    \]
  \end{enumerate}
\end{thm}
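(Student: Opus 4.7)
The common strategy for all three cases of Theorem~\ref{thm:maindegrees} is to exploit the pairing-model identity
\begin{equation*}
\tcd = \frac{(2m-1)!!}{\prod_{j=1}^n d_j!} \cdot P_2(\ds),
\end{equation*}
where $P_2(\ds)$ is the probability, in the uniform pairing model on $2m$ points partitioned according to $\ds$, that the resulting configuration is simple and 2-connected. The task therefore reduces, for each regime, to obtaining a sharp estimate of $P_2(\ds)$, which naturally factors as $P_{\mathrm{s}}(\ds)\cdot P_{2\mid\mathrm{s}}(\ds)$, the probabilities of simplicity and of 2-connectedness conditional on simplicity.

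Case (c) is the cleanest. The bound $\max d_i \le n^\epsilon$ together with $c\to\infty$ lies well inside McKay's regime, so $P_{\mathrm{s}}(\ds) \sim \exp(-\eta(\ds)/2 - \eta(\ds)^2/4)$. For 2-connectedness, $c\to\infty$ forces a truncated-Poisson-type degree distribution with only $o(n)$ vertices of degree~$2$, so the graph is dominated by the degree-$\ge 3$ vertices; a direct adaptation of \Luczak's theorem (cited in the introduction) handles the degree-$\ge 3$ part and a short switching argument absorbs the degree-$2$ vertices, giving $P_{2\mid\mathrm{s}}(\ds) \to 1$.

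For cases (a) and (b) the abundance of degree-$2$ vertices forces a refinement: I would work inside the kernel configuration model of~\cite{PWb}. A configuration is generated by first pairing the half-edges at the degree-$\ge 3$ vertices (forming the kernel, a multigraph), and then distributing the $D_2$ degree-$2$ vertices as interior vertices of subdivided kernel edges, with any leftover forming isolated cycle components. The $2$-core is $2$-connected precisely when there is no isolated cycle component and the kernel multigraph is $2$-connected; to leading order these events are asymptotically independent, so
\begin{equation*}
P_2(\ds) \sim P_{\mathrm{s}}(\ds)\cdot \pr(\text{kernel is 2-connected})\cdot \pr(\text{no isolated cycle}).
\end{equation*}
In case (b), assumptions (i)--(iii) place the kernel degree sequence close to a truncated-Poisson($\lambda_c$, values $\ge 3$) sample, and \Luczak's multigraph connectivity theorem gives an explicit positive limit for the kernel's $2$-connectedness probability; a direct Poisson computation of the number of cycle components formed by the degree-$2$ vertices provides the factor $\sqrt{(c-2p_c)/c}$, and combining it with the simplicity contribution $\exp(-\bar\eta_c/2 - \bar\eta_c^2/4)$ and simplifying via $\bar\eta_c = \lambda_c e^{\lambda_c}/(e^{\lambda_c}-1)$ and $g(\lambda_c)=c$ produces the claimed $\exp(-c/2 - \lambda_c^2/4)$. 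In case (a) the kernel has only $\Theta(r)$ vertices of degree essentially $3$, so it is a.a.s.\ $2$-connected, while the probability of no isolated cycles vanishes: the Poisson computation yields the factor $\sqrt{3r}/(e\sqrt{2m})$.

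The main obstacle is case (a), where both subproblems are in a degenerate regime: the kernel barely exists ($\Theta(r)$ vertices, with $r = o(m)$), and the Poisson mean of isolated cycles tends to infinity, so the vanishing probability $\pr(\text{no isolated cycle})$ must be computed sharply to a multiplicative $1+o(1)$. Maintaining a uniform asymptotic across all typical sequences --- as quantified by the tolerance $\psi(n) = r^{1-\epsilon}$ on $D_2$, $D_3$ and $\sum\binom{d_i}{2}$ together with the truncation $d_i \le 8\log(n-D_2)$ --- is delicate, since a perturbation of $D_3$ by $\psi(n)$ shifts the number of kernel edges and hence the mean of the isolated-cycle Poisson by a non-negligible amount; one must verify that this perturbation is absorbed into the $1+o(1)$ error.
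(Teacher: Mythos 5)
Your outline matches the paper's high-level strategy (pairing identity, Łuczak plus McKay for case (c), kernel configuration model for (a) and (b)), and case (c) is essentially the paper's argument. But the accounting in case (a) contains a genuine error. You assert that the kernel, having $\Theta(r)$ vertices of degree essentially $3$, is \aas{} $2$-connected, and that the entire factor $\sqrt{3r}/(e\sqrt{2m})$ is produced by the isolated-cycle computation. Neither claim is correct. The kernel is a random multigraph with $D_3/m'\to 2/3$, so the expected number of loops at its degree-$3$ vertices tends to $1$; each such loop forces a cut vertex of $G$ whether or not it is subdivided by degree-$2$ vertices, and by Łuczak's Lemma~12.1(iii) the kernel's probability of being \tec{} tends to $e^{-1}$, not $1$. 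That is precisely where the $1/e$ in the formula comes from, and the isolated-cycle Poisson computation cannot supply it: the probability of no isolated cycle, conditioned on simplicity, is asymptotic to $\sqrt{m'/m}=\sqrt{3r/(2m)}$ (the short-cycle corrections cancel against the simplicity factor), with no spare factor of $e^{-1}$. As written, your case (a) would yield the wrong constant.

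More structurally, the factorization $P_2(\ds)\sim P_{\mathrm{s}}\cdot\pr(\text{kernel 2-connected})\cdot\pr(\text{no isolated cycle})$, which you assert holds ``to leading order,'' is the entire difficulty and is false as an independence statement: for instance, a double edge of the kernel neither of whose sides receives a degree-$2$ vertex is a multiple edge of $G$, so simplicity of $G$ and the kernel's multigraph structure are strongly dependent. The paper avoids this factorization. In the kernel configuration model the factor $\sqrt{m'/m}$ (which equals $\sqrt{3r/(2m)}$ in case (a) and $\sqrt{(c-2p_c)/c}$ in case (b)) arises as an exact counting normalization from Corollary~2 of \cite{PWb} and Stirling's formula --- it is not a probability of avoiding isolated cycles --- and the remaining constant is obtained as a single joint Poisson limit, via the method of moments, for the number of kernel loops plus unsubdivided kernel double edges, giving $\exp(-c/2-\lambda_c^2/4)$ in case (b); Łuczak's Lemma~12.1(ii) then shows that conditional on this event the kernel is \aas{} \tec, and Proposition~\ref{prop:kernelequiv} upgrades edge-connectivity to vertex-connectivity. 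Your case (b) instead inserts McKay's $\exp(-\bar\eta_c/2-\bar\eta_c^2/4)$ as a separate simplicity factor and hopes the pieces recombine to $\exp(-c/2-\lambda_c^2/4)$; since $\bar\eta_c\neq c$ and $\bar\eta_c\neq\lambda_c$ for bounded $c$, this requires the other factors to contribute an exact compensating constant that you have not computed or justified. The proposal therefore does not establish the stated constants in parts (a) and (b).
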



\section{Background and preliminary results}
\lab{sec:prelim}
\subsection{Models for graphs of given degree sequence}\lab{ss:models}
The \emph{pairing model} or {\em configuration model} is a standard
theoretical tool for studying graphs of a given degree sequence.  For
$\ds \in \Dcal(n,m)$ a random perfect matching is placed on a set of
$2m$ points which are grouped into $n$ cells of size $d_1, d_2,
\dotsc, d_n$.  This random pairing naturally corresponds in an obvious
way to a random pseudograph (possibly containing loops or parallel
edges)  of
degree sequence $\ds$ in which each cell becomes a vertex.

Let $U(\ds)$ denote the probability the random pairing model is
simple, and $U'(\ds)$ the probability that a random pairing is both
\tc{} and simple.  It is well known that the number of pairings
corresponding to a given (simple) graph is $\prod_{j=1}^n d_j!$, thus
\bel{eqn:pairingd} \tcd = \frac{(2m-1)!!}{\prod_{j=1}^n d_j!}U'(\ds).
\ee

Let $\cnm$ denote the number of labelled  (simple) graphs with
$n$ vertices and $m$ edges (with vertex set $[n]$) with minimum degree
at least $2$ and let
\begin{equation*}
Q(n,m) 
\eqdef 
\sum_{\ds\in\Dcal(n,m)}\prod_{j=1}^n
\frac{1}{d_j!}.  
\end{equation*}
Recall the definition of the sequence $\Ys = (Y_1,\ldots, Y_n)$ of
independent truncated Poisson random variables with
parameter~$(2,\lambda_c)$. Let $\Sigma$ denote the event that $\sum_i Y_i
= 2m$. Then~\cite[Eq.\ (13)]{PWa} states that \bel{pairingsimple} \cnm
= (2m-1)!!Q(n,m)\esp[\big]{U(\Ys)| \Sigma}.  \ee This was obtained by
summing the analogue of~\eqn{eqn:pairingd} for $U(\ds)$ over
$\ds\in\Dcal(n,m)$. Applying the same argument to~\eqn{eqn:pairingd},
one easily obtains
\bel{eqn:pairingm} \tcnm = (2m-1)!!Q(n,m)\esp[\big]{U'(\Ys)| \Sigma}.
\ee

This distribution $\Ys$ has been studied by several authors. Facts
about it will be introduced as they are needed. The probability of the
event $\Sigma$ has been estimated quite precisely in terms of the
variance of $Y$. (See Lemma~2 and Theorem~4(a) of \cite{PWa}.)

We will use the estimate in Theorem~4(a) of \cite{PWa}:
\bel{eqn:probSigma} \prob{\Sigma} = \frac{1 + O(r^{-1})}{\sqrt{2\pi n
    c (1 + \bar\eta_c - c)}} =\Omega(1/\sqrt{r}), \ee where $r:=
2m-2n$ under the conditions that $r=O(n \log
n)$ and $r\to\infty$.  One of the reasons for which~\eqn{eqn:pairingm}
holds is that $Q(n,m)$ can be rewritten as $ (e^{\lambda_c} - 1 -
\lambda_c)^n \lambda_c^{-2m} \prob{\Sigma}$, and
so~\eqn{eqn:probSigma} gives
\begin{equation}
  \lab{eqn:Qnmdef}
  Q(n,m)
  =
  \frac{(e^{\lambda_c} - 1 - \lambda_c)^n}
  {\lambda_c^{2m}\sqrt{2\pi n c (1 + \bar\eta_c - c)}}
  (1 + O(r^{-1})).
\end{equation}

When the degrees are all at least 2, the \emph{kernel configuration
  model} of Pittel and Wormald \cite{PWb} can provide some advantages.
Before describing the model we need some definitions. The
\emph{2-core} of a graph is its maximal subgraph of minimum degree at
least 2.  The \emph{pre-kernel} of a graph is obtained from the
$2$-core by throwing away any components which are simply cycles. The
\emph{kernel} of a graph is obtained from the pre-kernel by replacing
each maximal path of degree-2 vertices by a single edge.  We say that
a pseudograph is a pre-kernel (respectively, a kernel) if it is the
pre-kernel (respectively, kernel) of some graph.  Now we are ready to
describe the kernel configuration model for a degree sequence $\ds \in
\Dcal(n,m)$.

For each $i$ with $d_i \geq 3$ create a set $S_i$ of $d_i$ points.
Choose, uniformly at random, a perfect matching on the union of these
sets of points. Assign the remaining numbers $\{ i : d_i=2 \}$ to the
edges of the perfect matching and, for each edge, choose a linear
order for these numbers.  The assignment and the linear ordering are
chosen uniformly at random. The pairing and assignment (with linear
orderings) are the configuration. A pseudograph $G$ is constructed by
collapsing each set $S_i$ to a vertex (producing a kernel $K$) and
placing the degree-2 vertices on the edges of the kernel according to
the assignment and linear orderings.

It is not hard to show (see Corollary 2 in \cite{PWb}) that each
pre-kernel can be produced by the same number of configurations, and
\[
\tcd
=
\frac{(2m'-1)!!(m-1)!\prob{\tcs(\ds)}}{(m'-1)!\prod_{i\in R(\ds)} d_i!},
\]
where $R = R(\ds) \eqdef \set{i\in [n] \st d_i \ge 3}$, $m' = m'(\ds)
\eqdef \frac12 \sum_{i\in R}d_i$, and $ \tcs(\ds)$ is the event that
the pre-kernel produced by the kernel configuration model is \tc{} and
simple. For later use, let $n' = n'(\ds)\eqdef \card{R(\ds)} =
\sum_{j\geq 3} D_j(\ds)$. By Stirling's formula, \bel{eqn:kcmd} \tcd =
\frac{(2m-1)!!\sqrt{m'/m}\prob{\tcs}}{\prod_{i=1}^n d_i!}  \left( 1 -
  O(1/m')\right).  \ee In~\cite[(5.3)]{PWb}, a similar expression for
the event of being connected and simple was summed over
$\ds\in\Dcal(n,m)$.  We can use the same argument,
using~\eqn{eqn:probSigma} and~\eqn{eqn:Qnmdef}, and defining
\bel{wdef} w(\ds) \eqdef \prob{\tcs(\ds)}\sqrt{m'}, \ee to get
\begin{equation}
  \begin{split}
    \lab{eqn:kcmm} 
    \tcnm &=  
    \left( 1 - O(1/m')\right)
    (2m-1)!!Q(n,m)\sqrt{m^{-1}}\esp{w(\Ys)|\Sigma}. 
    \\
    &=
    \left( 1 - O(1/r)\right)
    (2m-1)!!
    \frac{(e^{\lambda_c} - 1 - \lambda_c)^n}
    {\lambda_c^{2m}\sqrt{2\pi n c (1 + \bar\eta_c - c)}}
    \sqrt{m^{-1}}\esp{w(\Ys)|\Sigma}.   
  \end{split}
\end{equation}

\subsection{Relation between vertex and edge connectivity}
In this section we investigate some properties of \tc{} graphs which
may be of independent interest. We show that, asymptotically almost
surely, a random kernel is \tc{} if and only if it is \tec. (An event
is said to occur asymptotically almost surely (\aas) if its
probability is $1-o(1)$.) In this article, for convenience, we allow
\tc{} pseudographs and \tec{} pseudographs to have loops. In
particular, a cut-vertex of a pseudograph is a vertex whose removal
(along with all incident edges) increases the number of components,
and a graph is 2-connected if it has no cutvertices and at least three
vertices.

\begin{prop}
  \lab{prop:kernelequiv} Let $\ds\in\Dcal(n,m)$ satisfying $n\geq 3$
  and $3 \leq \delta = d_1\leq \cdots \leq d_n = \Delta \leq
  n^{0.04}$. Let $K$ be the kernel of the random pseudograph produced
  by the pairing model using degree sequence $\ds$.  \Aas, $K$ is
  \tc{} iff it is \tec.
\end{prop}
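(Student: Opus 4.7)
The direction ``\tc{} implies \tec{}'' holds deterministically for any pseudograph with $n \geq 3$ vertices: if $uv$ were a bridge, then whichever of $u,v$ lies on the side of $K - uv$ containing at least two vertices would be a cut-vertex. The converse is the substance of the proposition, and for this I would argue a.a.s.\ that no cut-vertex $v$ exists such that every component of $K - v$ reaches $v$ by at least two edges. Any cut-vertex of a \tec{} pseudograph has this property, for otherwise some edge incident to $v$ would be a bridge.

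Since $\delta \geq 3$, the kernel $K$ equals the random pseudograph itself, so the ordinary pairing model on $2m = \sum_i d_i$ points applies directly. I would bound by a first-moment calculation the expected number of triples $(v, S, a)$ where $v \in [n]$, $S \subseteq [n] \setminus \{v\}$ with $1 \leq |S| \leq \lfloor (n-1)/2 \rfloor$, and $2 \leq a \leq d_v - 2$, satisfying (i) no pair in the configuration joins $S$ to $[n] \setminus (S \cup \{v\})$ and (ii) exactly $a$ points of $v$ pair with points of $S$. Setting $t = \sum_{u \in S} d_u$, the number of pairings realising this event equals
\[
  \binom{d_v}{a} \binom{t}{a} a! \, (t - a - 1)!! \, (2m - t - a - 1)!!,
\]
and dividing by $(2m - 1)!!$ gives the probability.

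A Stirling-type simplification bounds this probability, up to polynomial-in-$n$ factors, by $(d_v t/m)^a \cdot (t/(2m))^{(t-a)/2}$. For fixed $|S| = s$ there are at most $\binom{n-1}{s} \leq (en/s)^s$ choices of $S$; combining this with the bounds $t \geq 3s$ (from $\delta \geq 3$) and $t \leq \Delta s \leq n^{0.04} s$ (which keeps us in the regime where the Stirling estimates are valid), the contribution from each $s$ reduces to $(s/n)^{\Theta(s)}$, which is $o(1/n)$ uniformly in $s$. Summing over $v \in [n]$ yields $o(1)$ expected witnesses, so by Markov's inequality no witness exists a.a.s., completing the non-trivial direction. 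The main obstacle is small $s$ (particularly $s = 1, 2$), where the binomial count $(en/s)^s$ is large while the closure probability only decays polynomially; the hypothesis $\Delta \leq n^{0.04}$ supplies the necessary slack, and a handful of cases must be verified by direct calculation, taking care of parity constraints such as $t - a$ being even and accounting for the possibility of loops or multi-edges at $v$ (permitted in the pairing model).
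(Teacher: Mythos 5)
Your proposal is correct in outline but takes a genuinely different route from the paper. The paper first establishes two structural lemmas in the spirit of {\L}uczak (a.a.s.\ no subgraph on $s\le n^{0.4}$ vertices spans more than $1.2s$ edges, and every set of $n^{0.3}\le s\le\lceil n/2\rceil$ vertices has more than $\delta$ neighbours), and then gives a short deterministic argument: a cut-vertex $v$ not lying on a bridge sends at least two edges into the small side $W_1$, so $W_1\cup\{v\}$ spans at least $1.25|W_1\cup\{v\}|$ edges, contradicting the first lemma for small $W_1$ and the second for large $W_1$. You instead run a single first-moment union bound directly over the cut configurations $(v,S,a)$; your witness structure, the exact pairing count $\binom{d_v}{a}\binom{t}{a}a!\,(t-a-1)!!\,(2m-t-a-1)!!$, and the resulting bound are all sound. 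What the paper's factorisation buys is that the delicate entropy-versus-probability trade-off (beating $\binom{n}{s}$ for $s$ up to $n/2$ using only $t\ge 3s$, and beating the merely polynomial decay for $s=O(1)$ using $\Delta\le n^{0.04}$) is quarantined inside two reusable lemmas whose proofs are simply cited; your approach is more self-contained but must confront both regimes in one computation, exactly as you anticipate. One bookkeeping caution: your claim that each $s$ contributes $o(1/n)$, followed by ``summing over $v\in[n]$ yields $o(1)$,'' double-counts the quantifiers --- the $(s/n)^{\Theta(s)}$ bound should be summed over all $s\le\lfloor(n-1)/2\rfloor$ as well as over $v$, and the conclusion holds because the contributions decay geometrically in $s$ (the total being dominated by the small-$s$ terms, which are $O(n^{-c})$ for some $c>0$), not because each term is $o(1/n)$. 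This is a presentational slip rather than a gap.
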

\begin{proof}
  Let $K$ be the random kernel produced by the pairing model using
  degree sequence $\ds$ satisfying $n\geq 3$ and $3 \leq \delta =
  d_1\leq \cdots \leq d_n = \Delta \leq n^{0.04}$.
  By closely following \Luczak's proofs of properties of
  (simple) graphs with given degree sequence in~\cite[Section~12.3]{Luczak}),
  it is straightforward to prove the following lemmas.
  \begin{lem}
    \label{lem:smallsets}
    \Aas, no subgraph of $K$ with $s$ vertices, $2\leq s\leq n^{0.4}$,
    has more than $1.2s$ edges.
  \end{lem}
  \begin{lem}
    \label{lem:largesets}
    \Aas, each subset of $K$ with $s$
    vertices, $n^{0.3}\leq s\leq \ceil{n/2}$ has more than $\delta$
    neighbours.
  \end{lem}
  Suppose that $v$ is a cut-vertex in $K$ not in a bridge. Then $v$
  decomposes $K$ into components $W_1$ and $W_2$ with $\card{W_1}\leq
  \card{W_2}$. Note that $v$ sends at least $2$ edges to $W_1$ and at
  least $2$ edges to $W_2$. (Otherwise $v$ would be in a bridge).

  Suppose that $\card{W_1}=1$. Then the number of edges induced by
  $W_1\cup\set{v}$ is at least $3$ (since $\delta\geq 3$) which
  is $ \frac{3}{2} \card{W_1\cup \set{v}}$.
  On the other hand, if $\card{W_1}\geq 2$, the number of edges
  induced by $W_1\cup\set{v}$ is at least $(3\card{W_1}+2)/2\ge 1.25
  \card{W_1\cup\set{v}}.$ For $ \card{W_1\cup\set{v}}\le n^{0.4}$, we
  conclude that such $v$ \aas{} does not exist, by
  Lemma~\ref{lem:smallsets}.  Otherwise, $ \card{W_1}\geq n^{0.3}$ and such $v$ \aas{}
  does not exist by Lemma~\ref{lem:largesets}.
  
  So \aas{} $K$ has a bridge if it has a cut-vertex. The converse is
  deterministically true for pseudographs with at least three vertices,
  and the proposition follows.
\end{proof}

Note that Lemmas~\ref{lem:smallsets} and~\ref{lem:largesets} actually
imply that \aas{} there are no cut-sets of cardinality from $2$ to
$\delta-1$ inclusive.


\section{The case $c \to 2$}

In this case, we can directly implement the plan presented in the
introduction: we examine the probability that a random $n$-vertex
graph is 2-connected when its vertex degrees are chosen as independent
truncated Poissons random variables, conditioned on the sum being
$2m$. We do this for typical degree sequences and then transfer this
result to a random $(n,m)$-graph with minimum degree 2.

Recall the definition of the sequence $\Ys$ of independent truncated
Poisson random variables and the associated event $\Sigma$ 
used in~\eqn{eqn:kcmm}. 

Define
$\mu_2 = \esp{D_2(\Ys)}$, $\mu_3 = \esp{D_3(\Ys)}$ and $\mu =
\esp{\sum_{i=1}^n \binom{Y_i}{2}}$.  We need to know the asymptotic
behaviour of these expected values.

\begin{lem}
  \lab{lem:expected1b}
  We have  
 $\mu_2 = n-r+o(r)$, 
  $\mu_3 = r+ o(r)$
  and
 $ \mu = n+2r+o(r)$.
\end{lem}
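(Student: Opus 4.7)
The plan is to Taylor-expand everything in sight around $\lambda=0$, since the hypothesis $c\to 2$ forces $\lambda_c\to 0$. All three quantities are sums of $n$ identical terms, so I only need to estimate $\Pr(Y=2)$, $\Pr(Y=3)$, and $\esp{Y(Y-1)}$ for a single truncated Poisson $Y$.

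First I would pin down $\lambda_c$ in terms of $r/n=c-2$. From $e^{\lambda}-1-\lambda=(\lambda^{2}/2)(1+\lambda/3+O(\lambda^{2}))$ and $\lambda(e^{\lambda}-1)=\lambda^{2}(1+\lambda/2+O(\lambda^{2}))$, expanding the ratio gives
\[
g(\lambda)=2+\tfrac{\lambda}{3}+O(\lambda^{2}).
\]
Solving $g(\lambda_c)=c=2+r/n$ by inversion yields
\[
\lambda_c=\tfrac{3r}{n}+O\!\left((r/n)^{2}\right),
\]
which tends to $0$ under the hypotheses $r\to\infty$, $r=O(n\log n)$, $c\to 2$.

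Next I would compute the three expectations by straightforward power series. Using the same expansion of $e^{\lambda_c}-1-\lambda_c$,
\[
\Pr(Y=2)=\frac{\lambda_c^{2}/2}{e^{\lambda_c}-1-\lambda_c}=\frac{1}{1+\lambda_c/3+O(\lambda_c^{2})}=1-\tfrac{\lambda_c}{3}+O(\lambda_c^{2}),
\]
so $\mu_2=n\Pr(Y=2)=n-n\lambda_c/3+O(n\lambda_c^{2})$. Substituting $\lambda_c=3r/n+O(r^{2}/n^{2})$ gives $n\lambda_c/3=r+O(r^{2}/n)$, and $n\lambda_c^{2}=O(r^{2}/n)$; since $c\to 2$ forces $r/n\to 0$, both error terms are $o(r)$, yielding $\mu_2=n-r+o(r)$. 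The same expansion gives $\Pr(Y=3)=\lambda_c/3+O(\lambda_c^{2})$, hence $\mu_3=r+o(r)$. Finally, a standard factorial-moment calculation yields
\[
\esp{Y(Y-1)}=\frac{\lambda_c^{2}e^{\lambda_c}}{e^{\lambda_c}-1-\lambda_c},
\]
so $\esp{\binom{Y}{2}}=e^{\lambda_c}/(1+\lambda_c/3+O(\lambda_c^{2}))=1+2\lambda_c/3+O(\lambda_c^{2})$, and summing over $i$ gives $\mu=n+2r+o(r)$.

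There is no substantive obstacle here; the only point requiring attention is that the error terms all collapse into $o(r)$, and this is precisely where the assumption $c\to 2$ (i.e.\ $r/n\to 0$) enters: it converts $O(n\lambda_c^{2})=O(r^{2}/n)$ into $o(r)$, which would fail in the bounded-$c$ regime. Everything else is Taylor bookkeeping.
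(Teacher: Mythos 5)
Your proof is correct. The paper reaches the same conclusions by a somewhat different route for two of the three quantities: for $\mu_2$ it avoids expanding $\Pr(Y=2)$ to second order by a sandwich argument --- the exact identity $\esp{\textstyle\sum_i Y_i}=2m$ together with the pointwise inequality $\sum_i Y_i\ge n'(\Ys)+2n$ gives $\mu_2\ge n-r$ exactly, while $\mu_2\le n-\mu_3$ gives the matching upper bound once $\mu_3=r+o(r)$ is established via $\Pr(Y=3)=(\lambda_c/3)\Pr(Y=2)$ and the crude bound $\mu_2=n+o(n)$; for $\mu$ it simply cites the computation following (5.6) of Pittel--Wormald, and it imports $\lambda_c=3(c-2)+O((c-2)^2)$ from Lemma~1(a) of \cite{PWa} rather than deriving it. Your direct Taylor expansions of $\Pr(Y=2)$, $\Pr(Y=3)$ and of the factorial moment $\esp{Y(Y-1)}=\lambda_c^2e^{\lambda_c}/(e^{\lambda_c}-1-\lambda_c)$ are all correct and make the argument self-contained, at the modest cost of needing the second-order term in $\Pr(Y=2)$ that the paper's sandwich sidesteps. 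You also correctly isolate the one point of substance, namely that $O(n\lambda_c^2)=O(r^2/n)=o(r)$ precisely because $c\to2$ forces $r/n\to0$.
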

The proof of this lemma is straightforward and depends only on
properties of $\Ys$ that follow easily from facts established by
Pittel and Wormald \cite{PWa, PWb}. The proof is presented at the end
of the section.

We now define a set of ``typical'' degree sequences. Let $\psi(n):
\setN \to \setR_+$ be any function such that $\psi(n) = o(r)$. (We
will specify a particular such function later.) Recall the
definition $n'(\ds) = \sum_{j\geq 3} D_j$. Let
\begin{equation*}
  \begin{split}
    \tilde\Dcal(\psi) \eqdef
    \bigg\{
    \ds \in \Dcal(n,m)\st
    &|D_2(\ds)-\mu_2|\leq \psi(n);
    \ |D_3(\ds)-\mu_3|\leq \psi(n);
    \\
    & \Big|\sum_{i=1}^{n}\binom{d_i}{2} -\mu\Big|\leq \psi(n);
    \ \max_i d_i \leq 8\log n'(\ds) 
    \bigg\}.
  \end{split}
\end{equation*}
and define $\tilde\Dcal^{c}(\psi) \eqdef \Dcal(n,m)\setminus
\tilde\Dcal(\psi)$.

Let $\ds\in\tilde\Dcal(\psi)$. We want to compute the probability of
$\tcs(\ds)$ as in~\eqn{wdef}. It easy to see that this is the same as
the event that $G$ is simple and $K$ is $2$-connected and loopless
(but permitting $K$ to have multiple edges). Let $B$ denote the event
that $G$ is simple and $K$ is \tec{} and has no loops.  The maximum
degree in $K$ is at most $8\log(n') < (n')^{0.04}$ so we may use
Proposition~\ref{prop:kernelequiv} to deduce $\prob{B} =
\prob{\tcs(\ds)}+o(1)$.

We have that $\max d_i \le 6\log n$ and, by Lemma~\ref{lem:expected1b}
and the definition of $\tilde\Dcal(\psi)$,
\begin{equation*}
  \sum_{i\in R(\ds)}
  \binom{d_i}{2}
  =
  \sum_{i=1}^n
  \binom{d_i}{2}
  -D_2(\ds)
  =
  n + 2r +o(r)
  - (n - r +o(r))
  =
  3r+o(r)
  < 4r,
\end{equation*}
for large $n$, which are sufficient conditions (by Lemma~5 in
\cite{PWb}) for a random kernel configuration to be \aas\
simple. Thus, the probability of $G$ being simple is $1+o(1)$.
(Actually, in~\cite{PWb} it is shown to be $1-O(r/n+1/r)$.)

For a random pairing having a given degree sequence of minimum degree
at least 3, the probability of being 2-edge-connectivity was
investigated by \Luczak{} in \cite{Luczak}.  He shows (in his Lemma
12.1(iii)) that this probability approaches $\exp\big(-\frac{3}{2} D_3/m'
\big)$ provided $D_3/m'$ approaches a positive constant. Using
Lemma~\ref{lem:expected1b} for $\mu_2$,
\begin{align*}
  m'(\ds) &= m - D_2(\ds) = m - \mu_2 + O(\psi) = m - \mu_2 + o(r)
 = \frac{3r+o(r)}{2}.
\end{align*}
Applying this to $K$, we have
\begin{equation*}
  \frac{D_3(\ds)}{m'}
  =
  \frac{r+o(r)}{(3/2)r+o(r)}
  \sim
  \frac{2}{3}
\end{equation*}
so the probability that $K$ is \tec{} goes to $1/e$.  Note that $K$
being \tec{} implies that there are no loops on vertices of degree $3$
in $K$.  The expected number of loops in $K$ on vertices of degree at
least 4 is
\[
\sum_{i:d_i\ge 4}\binom{d_i}{2}\frac{1}{2m'-1}
=
\left(\sum_{i=1}^n\binom{d_i}{2}-D_2-3D_3\right)\frac{1}{3r+o(r)}
=o(1)
\]
by Lemma~\ref{lem:expected1b}, so \aas{} no such loops exist.
We conclude that 
  $\prob{B} \sim 1/e$, and thus 
\begin{equation}
  \lab{eqn:probcase1b}
  \prob{\tcs(\ds)}\sim \frac{1}{e}.
\end{equation}

These two results together with~\eqn{eqn:kcmd}
give Theorem~\ref{thm:maindegrees}(a) and, in particular recalling
$w(\ds) = \prob{\tcs(\ds)}\sqrt{m'}$ from~\eqn{wdef},
\[
w(\ds) 
\sim \frac{1}{e}\sqrt{\frac{3r}{2}}.
\]

Finally, we will show Theorem~\ref{thm:maincases}(a). Let $\psi(n) =
r^{1-\eps}$ for some $\eps \in (0, 1/4)$. We will show
\begin{equation}
  \lab{eqn:typicalcase1bprob}
  \prob{\tilde\Dcal(\psi) | \Sigma} = 1 + O(\sqrt{r}/n) + O(r^{2\eps-1/2}).
\end{equation}
Then using the formula for $w(\ds)$ shown above for any
$\ds\in\tilde\Dcal(\psi)$, we have
\begin{equation*}
  \begin{split}
    \esp{w(\Ys)|\Sigma} &= 
    \esp{w(\Ys)|\tilde\Dcal(\psi)}\prob{\tilde\Dcal(\psi) | \Sigma}
   + \esp{w(\Ys)|\tilde\Dcal^c(\psi)}
    \prob{\tilde\Dcal^c(\psi) | \Sigma}
    \\
    &=
    \esp{w(\Ys)|\tilde\Dcal(\psi)}(1-O(\sqrt{r}/n)-O(r^{2\eps-1/2})) +o(\sqrt{r})
    \\
    &\sim \frac{1}{e}\sqrt{\frac{3r}{2}},
  \end{split}
\end{equation*}
which combined with~\eqn{eqn:kcmm}, and using 
\bel{csmall}
c (1 + \bar\eta_c - c) \sim c-2
\ee
for $c\to 2$ (see~\cite{PWa}(20)), gives the conclusion of
Theorem~\ref{thm:maincases}(a).
 
So it suffices to prove~\eqn{eqn:typicalcase1bprob}. Let $p_i$ denote
the probability that a variable with distribution
$\tpoisson{2}{\lambda_c}$ has value $i$. Recall that $\eta(\ds) =
(\sum_{i=1}^{n} d_i(d_i-1))/(\sum_{i=1}^{n} d_i)$. First we will study
the first three conditions in the definition of
$\tilde\Dcal(\psi)$. Let $F$ be the event that $\Ys$ fails to satisfy
any of the three conditions. Using Chebyshev's inequality, together
with $ p_2(1-p_2)n = O(r)$ and $ p_3(1-p_3)n \leq p_3 n = O(r) $ by
straightforward calculations, we have that
\begin{equation*}
  \prob{|D_2(\Ys) - \mu_2|\geq \psi(n)}
  \leq
  \frac{p_2(1-p_2)n}{\psi(n)^2} 
  = O\left(\frac{r}{\psi(n)^2}\right)
\end{equation*}
and 
\begin{equation*}
  \prob{|D_3(\Ys) - \mu_3|\geq \psi(n)}
  \leq
  \frac{p_3(1-p_3)n}{\psi(n)^2}
  = O\left(\frac{r}{\psi(n)^2}\right).
\end{equation*}
There is a concentration result for $\mu$ shown in~\cite[p.  262]{PWa}  
which may be expressed as 
\begin{equation*}
  \prob[\bigg]{\Big|\sum_{i=1}^n\binom{Y_i}{2} - \mu\Big|\geq \psi(n)}
  =
  O\left(\frac{\lambda_c m^2}{n\psi(n)^2}\right).
\end{equation*}
In \cite{PWa},
Lemma~1(a) states \bel{eqn:lambda1b} \lambda_c = 3(c-2)+O((c-2)^2) =
3r/n + O(r^2/n^2).  \ee Thus,
\begin{equation*}
  \begin{split}
    \frac{\lambda_c m^2}{n}
    =
    \frac{3r m^2}{n^2} +
    O
    \left(
      \frac{r^2 m^2}{n^3}
    \right)
    =
    O(r).
  \end{split}
\end{equation*}
Hence
\begin{equation*}
  \prob[\bigg]{\Big|\sum_{i=1}^n\binom{Y_i}{2} - \mu\Big|\geq \psi(n)}
  =
  O\left(\frac{r}{\psi(n)^2}\right).
\end{equation*}
This implies that $\prob{F} = O( r/\psi(n)^2)$.
Using~\eqn{eqn:probSigma} we get
\begin{equation*}
  \prob{F | \Sigma}
  \leq
  \frac{\prob{F}}{\prob{\Sigma}}
  =
  O(\sqrt{r})O\left(\frac{r}{\psi(n)^2} \right)
  =
  O\left(\frac{r^{3/2}}{\psi(n)^2} \right).
\end{equation*}

Now consider the last condition in the
definition of $\tilde\Dcal(\psi)$: $\max_i d_i \leq 8\log n'(\ds)$. If
the first condition in the definition of $\tilde \Dcal(\psi)$ holds,
then, using Lemma~\ref{lem:expected1b}, we have $D_2(\ds) = n - r +
\phi(n)$ for some function $\phi(n) = o(r)$ and so $n'(\ds) = r -
\phi(n)$. Let $F'$ denote the event that the first condition holds but
the last condition fails. Thus, $\prob{F'} \leq \prob[\big]{\max_i Y_i \geq
  8\log(r - \phi(n))}$. For $r \leq \sqrt{n}$, it is easy to see that
$\esp{D_j(\Ys)} = O\left( r^{j-2}/n^{j-3} \right)$ for every $j\geq
4$. Thus, using Markov's inequality and the union bound, one can prove
that
\begin{equation*}
  \prob{D_j(\Ys) \geq 1 \text{ for some }j\geq 8}\leq n\cdot O(1/n^2)
  =
  O(1/n).
\end{equation*}
For $r > \sqrt{n}$, it is easy to bound the tail probability of $Y_i$
(see (3.17) of~\cite{PWb} for example) as
\begin{equation*}
  \prob{Y_i \geq 8\log (r-\phi(n))}
  =
  O\left(
    \exp(-4\log (r-\phi(n)))
  \right)
  =
  O\left(
    \exp(-4\log r  )
  \right)
  =
  O\left(
    \frac{1}{n^2}
  \right).
\end{equation*}
Thus, $\prob{\max_i Y_i > 8\log (r-\phi(n))} = O(1/n)$. Since
$\prob{\Sigma} = \Omega(1/\sqrt{r})$, we conclude that
\begin{equation*}
  \prob{F' | \Sigma}
  \leq
  O(\sqrt{r}) O(1/n)
  =
  O(\sqrt{r}/n).
\end{equation*}
Hence
\begin{equation*}
  \prob{\tilde\Dcal(\psi) | \Sigma}
  \geq
  1 - \prob{F |\Sigma} - \prob{F' | \Sigma}
  =
  1 +  O\left(\frac{r\sqrt{r}}{\psi(n)^2} \right)
  + O(\sqrt{r}/n) 
  =
  1 + O(r^{2\eps-1/2})  + O(\sqrt{r}/n),
\end{equation*}
and we proved~\eqn{eqn:typicalcase1bprob}.

\begin{proof}[Proof of Lemma~\ref{lem:expected1b}]
  Let $r(\Ys)\eqdef \sum_{i=1}^n Y_i - 2n$ and
  $n'(\Ys)\eqdef n - D_2(\Ys)$. Note that $r(\Ys)$ may not
  coincide with $r$ because we are not conditioning on $\Sigma$. But
  \begin{equation}
    \label{eq:esp_rY}
    \esp{r(\Ys)}
    =
    \esp[\Big]{\sum_{i=1}^n Y_i} - 2n
    =
    \sum_{i=1}^n  \frac{2m}{n} - 2n
    =
    2m-2n
    =r.
  \end{equation}

  Note that 
  \begin{equation*}
    \sum_{i=1}^n Y_i 
    = 
    \sum_{i\in R(\Ys)} Y_i +  2D_2(\Ys)
    \geq
    \sum_{i\in R(\Ys)} 3  + 2n - 2n'(\Ys)
    =
    3n'(\Ys)  + 2n - 2n'(\Ys)
    =
    n'(\Ys)+2n.
  \end{equation*}
  Hence, $n'(\Ys)\leq r(\Ys)$.

  Thus, 
  \begin{equation}
    \label{eq:lower_D_2}
    D_2(\Ys) = n-n'(\Ys) \geq n-r(\Ys).
  \end{equation}
  and so, by~\eqref{eq:esp_rY},
  \begin{equation}
    \label{eq:lower_esp_D_2}
    \esp{D_2(\Ys)} \geq n- r.
  \end{equation}
  Moreover, $D_2(\Ys) \leq n - D_3(\Ys)$, which implies that
  \begin{equation}
    \label{eq:upper_D_2}
    \esp{D_2(\Ys)}\leq n -\esp{D_3(\Ys)}.
  \end{equation}

  Since $n-r= n+o(n)$ and $n -D_3(\Ys)\leq n$, we conclude that
  $\esp{D_2(\Ys)} = n+o(n)$. 
  Using~\eqn{eqn:lambda1b},
  \begin{equation*}
    \begin{split}
      \esp{D_3(\Ys)}
      &=
      \frac{\lambda_c^3}{3! (e^{\lambda_c}-1-\lambda_c)}n
      =
      \frac{\lambda_c}{3}\esp{D_2(\Ys)}
      =
      \left(
        \frac{r}{n}
        +
        O\left(
          \frac{r^2}{n^2}
        \right)
      \right)
      (n+o(n))
      \\
      &=
      r+o(r)+O(r^2/n)
      =
      r+o(r).
    \end{split}
  \end{equation*}

  By~\eqref{eq:upper_D_2}, $\esp{D_2(\Ys)}\leq n-\esp{D_3(\Ys)} =
  n-r+o(r)$. So by~\eqref{eq:lower_esp_D_2}, we conclude that
  $\esp{D_2(\Ys)} = n-r+o(r)$.

  In \cite{PWb}, the line after (5.6) (with error term corrected to $O(r^2/n)$) 
  states
  $\esp[\Big]{\sum_{i=1}^n\binom{Y_i}{2}} = n+2r+O(r^2/n) = n+2r+o(r)$.
\end{proof}


\section{The case $c$ bounded away from $2$, and bounded}
\lab{sec:case1a}
Let $\psi:\setN\to \setR$ be a function such that $\psi(n) = o(1)$.
Let
\begin{equation*}
  {\tilde\Dcal}(\psi)
  \eqdef
  \set[\Big]{\ds\in\Dcal(n,m)\st
    d_i\leq 6\log n \ \forall i;\ 
    |\eta(\ds)- \bar{\eta_c}| \leq \psi(n);\
    |D_2(\ds) - p_c n|\leq n\psi(n) 
  }
\end{equation*}
Let ${\tilde\Dcal^c(\psi)} \eqdef \set{\ds\in \setN^n\st d_i\geq 2 \
  \forall i; \ds\not\in\tilde\Dcal(\psi)}$. (Note that if
$\ds\in\tilde\Dcal(\psi)$ then $\sum d_i =2m$ but we do not have this
constraint for ${\tilde\Dcal^c(\psi)}$.)

Let $\ds\in \tilde\Dcal(\psi)$.  We use the kernel configuration model
to investigate the graphs with no isolated cycles and with degree
sequences in~$\tilde\Dcal(\psi)$. According to the general plan in the
introduction, we will then see that the probability such graphs are
2-connected is concentrated around a given value when the degree
sequence consists of independent truncated Poissons, and show how this
probability then carries over to random graphs with a given number of
edges.

Let $\ds'$ be the restriction of $\ds$ to the coordinates with value
at least $3$, and let $G$ be obtained using the kernel configuration
model with degree sequence $\ds$. Recall $n'= \card{\set{i\st d_i\geq
    3}}$.

Let $P$ be the random perfect matching placed on a set
$S$ with $\sum_{i=1}^{n'} d_i'$ points grouped in cells of size $d_1',
d_2',\ldots, d_{n'}'$. Let $K$ be the kernel (obtained by contracting
the cells of $P$). Let $v_i$ denote the vertex with degree $d_i'$
in~$K$. Let $M$ denote the number of edges in $K$.

We want to compute the probability that $G$ is $2$-connected and
simple.  Let $B$ be the event that $G$ is simple and that $K$ is
\tec{} and has no loops. Since $n' = (1-p_c)n + o(n) = \Theta(n)$, we
have $\max_i d_i \le 6 \log n \le (n')^{0.04}$, and so
Proposition~\ref{prop:kernelequiv} says that, conditioning on $B$, $K$
is \aas{} \tc{}. If $K$ is \tc{} and loopless, it is easy to show that
$G$ is also \tc.  In other words,
\begin{equation*}
  \prob{\tcs|B}
  = (1+o(1)).
\end{equation*}
Note that $\tcs \subseteq B$ for $n > 2$.

Let $A$ denote the event that $G$ has no multiple edges and $K$ has no
loops.  \Luczak{} has shown (see Lemma~12.1(ii) in \cite{Luczak}) that in
a random pseudograph with given degree sequence, with the distribution
of pairing model, having minimum degree at least 3, \aas{} all \tec{}
components, except at most one, are loops at vertices of degree
$3$. Hence, $ \prob{A\setminus B} = o(1)$. Since $B\subseteq A$, we
deduce $ \prob{A} = \prob{B} + o(1)$.

We will show that 
\begin{equation}
  \lab{eqn:probA}
  \prob{A} \sim p_a \eqdef \exp(-c/2 -\lambda_c^2/4).
\end{equation}
Hence,
\begin{equation}
  \lab{eqn:probtcscase1a}
  \prob{\tcs}
  =
  \prob{\tcs | B} \prob{B}
  =
  (1+o(1))\prob{A}
  \sim
  p_a.
\end{equation}

Note that $D_2(\ds) = p_cn + nO(\psi(n))\sim p_cn$ and $\eta(\ds) =
\bar\eta_c + O(\psi(n))\sim \bar\eta_c$.  Thus
\begin{equation*}
  \sqrt{\frac{m'(\ds)}{m}}
  =
  \sqrt{\frac{m-D_2(\ds)}{m}}
  =
  \sqrt{\frac{(c/2)n-p_c n + o(n)}{(c/2)n}}
   \sim
  \sqrt{\frac{c-2p_c}{c}}
\end{equation*}
since $c>2$ and $p_c\leq 1$. Using this fact together
with~\eqn{eqn:probtcscase1a},
\begin{equation}
  \lab{eqn:functionwcase1a}
 \prob{\tcs(\ds)}\sqrt{m'}
  \sim
  \sqrt{m}
  \sqrt{\frac{c-2p_c}{c}}p_a ,
\end{equation}
which together with~\eqn{eqn:kcmd} proves Theorem~\ref{thm:maindegrees}(b).

So in order to prove Theorem~\ref{thm:maindegrees}(b), it suffices to
prove~\eqn{eqn:probA}. The proof is presented in
Section~\ref{sec:probcase1a}.

We now prove Theorem~\ref{thm:maincases}(b). First we show that
\begin{equation}
  \lab{eqn:probtypical}
  \prob{\Ys\in {\tilde\Dcal^c(\psi)}} = O(n^{-1}\psi(n)^{-2})
  \quad\text{and}\quad
  \prob{\Ys\in {\tilde\Dcal^c(\psi)}| \Sigma} =
  O(n^{-1/2}\psi(n)^{-2}).
\end{equation}

We will use some properties of $\Ys$ developed by Pittel and Wormald
\cite{PWa}. Equation~(27) in \cite{PWa} states that $\prob{Y \ge j_0}
= O(\exp(-j_0/2))$ provided $j_0>2e\lambda_c$, where $Y\sim
\tpoisson{2}{\lambda_c}$.  Lemma~1(b) in the same paper assures
$\lambda_c\le 2m/n$, which is $O(1)$ in the present case, allowing us
to choose $j_0=6\log n$, apply the union bound, and conclude
\begin{equation*}
  \prob{\max_i Y_i > 6\log n}
  =O\left(\frac{1}{n^2}\right).
\end{equation*}
Note that $D_2(\Ys)$ has binomial distribution with probability
$p_c$. Using Chebyshev's
inequality,
\begin{equation*}
  \prob{|D_2(\Ys) - p_c n|\geq n\psi(n)}
  \leq
  \frac{p_c(1-p_c)n}{n^{2}\psi(n)^2}
  = O\left( \frac{1}{n\psi(n)^2}\right)
\end{equation*}
since $0\leq p_c\leq 1$

Pittel and Wormald also show (see~\cite[p. 262]{PWa}), 
\begin{equation*}
  \prob{|\eta(\Ys)-\bar\eta_c|\geq \alpha}
  =
  O\left( \frac{\lambda_c}{n\alpha^2}\right).
\end{equation*}
Since $\lambda_c\leq c = O(1)$,
\begin{equation*}
  \prob{|\eta(\Ys)-\bar\eta_c|\geq \psi(n)}
  =
  O\left( \frac{\lambda_c}{n\psi(n)^2}\right)
  =
  O\left( \frac{1}{n\psi(n)^2}\right).
\end{equation*}

Hence,
\begin{equation*}
  \prob{\Ys\in \tilde\Dcal^c}
  =
  O\left( \frac{1}{n\psi(n)^2}\right).
\end{equation*}

Since $r\eqdef 2m-2n=\Theta(n)$, \eqn{eqn:probSigma} implies that
$\prob{\Sigma} =
\Omega(1/\sqrt{n})$. 
Conditioning on $\Sigma$, we have
\begin{equation*}
  \prob{\Ys\in \tilde\Dcal^c| \Sigma}
  \leq
  \frac{\prob{\Ys\in \tilde\Dcal^c}}{\prob{\Sigma}}
  =
  O\left(
    \frac{n^{1/2}}{n\psi(n)^2}
  \right)
  =
  O\left(
    \frac{1}{n^{1/2}\psi(n)^2}
  \right).
\end{equation*}
This proves~\eqn{eqn:probtypical}.

Let $\psi(n)   =n^{-\eps}$ for some constant $\eps
\in (0,1/4)$.  Using~\eqn{eqn:functionwcase1a}
and~\eqn{eqn:probtypical},
\begin{equation*}
  \begin{split}
    \esp{w(\Ys) | \Sigma}
    &=
    \esp{w(\Ys)|\tilde\Dcal(\psi)}\prob{\tilde\Dcal(\psi) | \Sigma}
    +
    \esp{w(\Ys)|\Sigma\cap \tilde\Dcal^c(\psi)}\prob{\tilde\Dcal^c(\psi) | \Sigma}.
  \end{split}
\end{equation*}
Note that $w(\Ys)\leq \sqrt{m'}$ since $\prob{\tcs}\leq
1$. By~\eqn{eqn:probtypical}, we have that $\prob{\tilde\Dcal^c(\psi)
  | \Sigma} = O(1/n^{1/2-2\eps})$. So $\esp{w(\Ys)|\Sigma\cap
  \tilde\Dcal^c}\prob{\tilde\Dcal^c | \Sigma} =
O(\sqrt{m'}/n^{1/2-2\eps})$. Hence,
\begin{equation*}
  \begin{split}
    \esp{w(\Ys) | \Sigma} &= \esp{w(\Ys)|\Sigma\cap
      \tilde\Dcal(\psi)}(1-O(1/n^{1/2-2\eps})) +
    O(\sqrt{m'}/n^{1/2-2\eps})
    \\
    &= \sqrt{m} \sqrt{\frac{c-2p_c}{c}}p_a
    (1+o(1))(1-O(1/n^{1/2-2\eps})) + O(\sqrt{m'}/n^{1/2-2\eps})
    \\
    &= \sqrt{m} \sqrt{\frac{c-2p_c}{c}}\exp(-c/2 -\lambda_c^2/4)
    (1+o(1)),
  \end{split}
\end{equation*}
which together with~\eqn{eqn:kcmm} proves
Theorem~\ref{thm:maincases}(b).

\subsection{Showing $\prob{A}\sim p_a$}
\lab{sec:probcase1a}

Here we show~\eqn{eqn:probA}. Recall that $\ds\in \tilde\Dcal(\psi)$.
Let $e_1,\dotsc,e_\ell$ denote the possible loops in $K$. For every
$1\leq i\leq \ell$, let $X_i$ be the indicator variable for $e_i\in
E(K)$. Let $X= \sum_{i=1}^\ell X_i$. Let $f_1,\dotsc, f_t$ denote the
possible double edges in $K$ (here we do not include double loops).
For every $1\leq j\leq t$, let $Y_j$ be the indicator variable for
$f_j\subseteq E(G)$. Let $Y=\sum_{j=1}^t Y_j$.

Using the method of moments, we will show that
$X+Y\stackrel{\textrm{d}}{\to}
\poisson{c/2+ \lambda_c^2/4}$, which gives~\eqn{eqn:probA}. For this we need to show, for
every positive integer $k$, that
\begin{equation*}
  \esp{[X+Y]_k} = \left( \frac{c}{2}+\frac{\lambda_c^2}{4}\right)^k + o(1).
\end{equation*}

Considering the first moment, note that for every $1\leq i\leq
\ell$, we have that $\prob{X_i = 1}\sim 1/(2M)$. For the double
edges, we need to know the probability that a given set of edges of
the kernel is not assigned any vertices of degree 2 in the kernel
configuration model.  Let
\begin{equation}
  \delta=
    \left(\frac{c - 2p_c}{c}\right)^2
  = \left(\frac{\lambda_c}{c}\right)^2.
\end{equation}
For any fixed $q$ and any set of edges $\set{e_1,\ldots, e_q}$ in $K$, the probability that none of these kernel edges is assigned a vertex of degree 2 (and hence become edges of $G$) can be estimated as follows.
  \begin{equation}
    \begin{split}
      \prob{\set{e_1,\ldots,e_q}\subseteq E(G) | 
        \set{e_1,\ldots,e_q}\subseteq E(K)}
      &=
      \prod_{i=0}^{D_2-1} \left(1-\frac{q}{M+i}\right)
      \sim
      \exp\left(
        -q\sum_{i=0}^{D_2-1} \frac{1}{M+i}
      \right)
      \\
      &\sim
      \left(\frac{M-1}{M+D_2-1}\right)^q
      \sim
      \left(\frac{cn/2 - p_cn}{cn/2}\right)^q   
      =\delta^{q/2}.   \lab{delta}
    \end{split}
  \end{equation}
  Thus, for every $1\leq j\leq t$, we have that
  \begin{equation*}
    \prob{Y_j = 1}= 
    \prob{f_j\subseteq E(K)}
    \cdot\prob{f_j\subseteq E(G) | f_j\subseteq E(K)}
    \sim
    \frac{\delta}{(2M)^2}.
      \end{equation*}
  Hence,
  \begin{equation*}
    \begin{split}
      \esp{X+Y}
      &=
      \esp{X}+\esp{Y}
      \sim
      \ell\cdot\frac{1}{2M}
      +
      t\cdot\frac{\delta}{(2M)^2}
      \\  
      &=
      \frac{\sum_{i=1}^{n'}\binom{d_i'}{2}}{2M}
      +
      \frac{\delta}{(2M)^2}\sum_{\substack{(i,j)\\i\neq j}} 
      \binom{d_i'}{2}\binom{d_j'}{2}.
    \end{split}
  \end{equation*}
  We will use the following lemma, which is proved in the end of the
  section.
\begin{lem}
  \label{lem:fixed_k}
  Let $q$ be a fixed positive integer. For $d\in \tilde\Dcal(\psi)$,
  \begin{equation*}
    {\sum_{\substack{(i_1,\dotsc,i_q)}} 
      \prod_{j=1}^q \binom{d_{i_j}'}{2}}\cdot\frac{1}{(2M)^q}
    \sim
    \left(\frac{c}{2}\right)^q,
  \end{equation*}
  where the sum is over all $(i_1,\dotsc,i_q)\in [n']^q$ where $i_j
  \neq i_{j'}$ for all $j\neq j'$.
\end{lem}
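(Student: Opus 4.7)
Let me write $a_i = \binom{d_i'}{2}$ and $T = \sum_{i=1}^{n'} a_i$. The plan is to prove two things: first that $T/(2M) \to c/2$, and second that the distinct-tuple sum $S_q \eqdef \sum \prod_j a_{i_j}$ is well-approximated by the unrestricted power $T^q$. Together these give $S_q/(2M)^q \to (c/2)^q$.

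First I would relate $T$ to the quantities controlled by $\tilde\Dcal(\psi)$. Since $d_i' = d_i$ whenever $d_i \geq 3$ and $\binom{2}{2}=1$, summing over all $i\in[n]$ gives
\[
T = \sum_{i=1}^n \binom{d_i}{2} - D_2(\ds) = m\,\eta(\ds) - D_2(\ds).
\]
By the defining conditions of $\tilde\Dcal(\psi)$, $\eta(\ds) = \bar\eta_c + o(1)$ and $D_2(\ds) = p_c n + o(n)$, while $m = cn/2$; hence $T = n\bigl[(c/2)\bar\eta_c - p_c\bigr] + o(n)$ and $2M = 2m - 2D_2 = (c - 2p_c)n + o(n)$.

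The key algebraic step is to check, directly from the definitions $g(\lambda_c)=c$, $\bar\eta_c = \lambda_c e^{\lambda_c}/(e^{\lambda_c}-1)$, $p_c = \lambda_c^2/\bigl(2(e^{\lambda_c}-1-\lambda_c)\bigr)$, the two identities
\[
c - 2p_c = \lambda_c, \qquad \tfrac{c}{2}\bar\eta_c - p_c = \tfrac{c\lambda_c}{2}.
\]
Both are short manipulations: multiplying $g(\lambda_c)=c$ out gives $(c-\lambda_c)(e^{\lambda_c}-1) = c\lambda_c$, from which both identities drop out. Substituting these in yields $T/(2M) \to (c\lambda_c/2)/\lambda_c = c/2$.

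For the passage from $T$ to $S_q$, I would use the standard inclusion-exclusion bound
\[
0 \le T^q - S_q \le \binom{q}{2}\Bigl(\sum_i a_i^2\Bigr) T^{q-2} + O(T^{q-2}),
\]
coming from tuples in which at least two coordinates coincide. The condition $\max_i d_i \le 6\log n$ gives $\max_i a_i = O(\log^2 n)$, so $\sum_i a_i^2 \le (\max_i a_i)\,T = O(T\log^2 n)$. Since $T = \Theta(n)$, this forces $T^q - S_q = O(T^{q-1}\log^2 n) = o(T^q)$, and hence $S_q/(2M)^q \sim (T/(2M))^q \to (c/2)^q$, as required.

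The main obstacle is really only the algebraic identity $c-2p_c = \lambda_c$; once that is in hand the rest is routine bookkeeping with the error terms from $\tilde\Dcal(\psi)$ and a trivial maximum-degree bound for the collision correction. Everything else boils down to substituting the definitions of $\bar\eta_c$, $p_c$ and $\lambda_c$ and collecting $o(1)$ terms.
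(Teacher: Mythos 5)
Your proposal is correct and follows essentially the same route as the paper: both compute the unrestricted sum as the $q$-th power of $\sum_i \binom{d_i'}{2}/(2M)$, identify its limit as $c/2$ via the relations among $c$, $\lambda_c$, $\bar\eta_c$, $p_c$ (the paper asserts the identity $(\bar\eta_c c - 2p_c)/(c-2p_c)=c$, which is equivalent to your two sub-identities $c-2p_c=\lambda_c$ and $(c/2)\bar\eta_c - p_c = c\lambda_c/2$), and then discard the coincident tuples using $\Delta = O(\log n)$ and $M=\Theta(n)$. No gaps.
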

Thus,
\begin{equation*}
  \esp{X+Y}
  \sim
  \frac{c}{2}+\frac{\lambda_c^2}{4}.
\end{equation*}     

It only remains to examine the higher moments, and show that
\begin{equation*}
  \esp{[X+Y]_k}
  =
  \sum_{k_1+k_2 = k}
  \binom{k}{k_1}
  \sum_{y\in I(k_1,k_2)} \prob{W(y) = 1} 
\end{equation*}
for $y\in I(k_1,k_2)$, where $I(k_1,k_2)$ is the set of tuples $y\in
(\set{e_1,\dotsc, e_\ell})^{k_1}\times (\set{f_1,\dotsc, f_t})^{k_2}$
such that $y_i\neq y_j$ for $i\neq j$ and $\bigcup_{i=1}^k \set{y_i}$
induces a matching on the set of points $S$, and $W(y)$ is the
indicator variable for the event that $X_{i} = 1$ for every $e_i\in
\set{y_1,\dotsc, y_k}$ and $Y_{j} = 1$ for every
$f_j\in\set{y_1,\dotsc, y_k}$.

Let $I'(k_1, k_2)$ be the set of tuples $y\in I(k_1,k_2)$ such that,
in the graph induced by $\bigcup_{i=1}^k \set{y_i}$ in $K$, the degree of
every vertex is either $0$ or $2$. (This is the non-overlapping case.)
Let $I''(k_1, k_2)\eqdef I(k_1,k_2)\setminus I'(k_1,k_2)$.

For $y\in I''(k_1,k_2)$, it is easy to see that the graph induced by
$\bigcup_{i=1}^k\set{y_i}$ in $K$ has more edges than vertices. For
any fixed multigraph $H$ with more edges than vertices, the expected
number of copies of $H$ in $K$ can be bounded as follows. There are at
most $(n')^{|V(H)|}$ ways of assigning the vertices of $H$ to vertices
of $K$. If we assign a vertex with degree $d$ in $H$ to a vertex $v$
in~$K$, then there are at most $\Delta^{d}$ ways of choosing the
points inside $v$ to be the points of the vertex in $H$. So there are
at most $(n')^{|V(H)|} \Delta^{2 |E(H)|} = O((n')^{|V(H)|} (\log n)^{2
  |E(H)|})$ possible copies of $H$ in~$K$. The probability that a set
of $|E(H)|$ edges in $K$ is $O(M^{-|E(H)|})$. Thus, the expected
number of copies of $H$ in $K$ is at most
\begin{equation*}
  O\left( \frac{(n')^{|V(H)|} (\log n)^{2 |E(H)|}}{M^{|E(H)|}}\right)
  =
  O\left( \frac{(n')^{|V(H)|} (\log n)^{2 |E(H)|}}{(n')^{|V(H)|+1}}\right)
  =
  o(1).
\end{equation*}
From this, since $k$ is fixed, we deduce that
\begin{equation*}
  \sum_{k_1+k_2 = k}
  \binom{k}{k_1}
  \sum_{y\in I''(k_1,k_2)} \prob{W(y) = 1}
  = o(1).
\end{equation*}

For $I'(k_1,k_2)$, using~\eqn{delta} and Lemma~\ref{lem:fixed_k},
\begin{equation*}
  \begin{split}
    \sum_{y \in I'(k_1,k_2)}
    \prob{W(y) = 1}
    &=
    \sum_{y \in I'(k_1,k_2)}
    \frac{\delta^{k_2}}{(2M)^{k_1+2k_2}}
    =
    \card{I'(k_1,k_2)}\frac{1}{(2M)^{k_1+2k_2}}
    \delta^{k_2}
    \\
    &=
    \sum_{(v_1,\ldots, v_{k_1+2k_2})}
    \prod_{i=1}^{k_1+2k_2} \binom{d_{v_i}'}{2}
    \frac{1}{(2M)^{k_1+2k_2}}
    \cdot \delta^{k_2}
    \\
    &\sim
    \left(\frac{c}{2}\right)^{k_1+2k_2}\delta^{k_2},
  \end{split}
\end{equation*}
where $v_i\neq v_j$ in $(v_1,\ldots, v_{k_1+2k_2})$ for every $i\neq
j$.

Thus,
\begin{equation*}
  \begin{split}
    \esp{[X+Y]_k}
    &=
    o(1)+
    \sum_{k_1+k_2=k}
    \binom{k}{k_1}
    \sum_{y \in I'(k_1,k_2)}
    \prob{W(y) = 1}
    \\
    &=
    \sum_{k_1+k_2=k}
    \binom{k}{k_1}
    \left(
      \frac{c}{2}
    \right)^{k_1+2k_2}\delta^{k_2}
    +o(1)
    \\
    &=
    \left(
      \frac{c}{2}+\frac{\lambda_c^2}{4}
    \right)^k
    +o(1), 
  \end{split}
\end{equation*}
as required to establish~\eqn{eqn:probA}.

\begin{proof}[Proof of Lemma~\ref{lem:fixed_k}]
  For every $q\geq 1$, let
  \begin{equation*}
    L_q\eqdef 
    \set{(i_1,\dotsc,i_q)\st 1\leq i_j \leq n' \ \forall j};
  \end{equation*}
  \begin{equation*}
    L_q^{\neq}\eqdef 
    \set{(i_1,\dotsc,i_q)\in L_q\st i_j\neq i_{j'} \ \forall j\neq j'}
  \end{equation*}
  and
  \begin{equation*}
    L_q^{=}\eqdef 
    \set{(i_1,\dotsc,i_q)\in L_q\st i_j = i_{j'} \ \text{ for some } j\neq j'}.
  \end{equation*}

  We have 
  \begin{equation*}
    \frac{\sum_{i=1}^{n'} d_i'(d_i'-1)}{\sum_{i=1}^{n'}d_i'}
    =
    \frac{\sum_{i=1}^{n} d_i(d_i-1)-2 D_2}{\sum_{i=1}^{n}d_i-2D_2}
    \sim
    \frac{\bar\eta_ccn -2p_cn}{cn-2p_cn}
    =
    \frac{\bar\eta_cc -2p_c}{c-2p_c}
    =c.
  \end{equation*}
  So, for every $q\geq 1$,
  \begin{equation}
    \label{eq:Lk}
    \sum_{(i_1,\ldots, i_q) \in L_q}
    \prod_{j=1}^q \binom{d_{i_j}'}{2}\cdot\frac{1}{(2M)^q}
    =
    \left( 
      \frac{\sum_{i} \binom{d_i'}{2}}{2M}
    \right)^q
    \sim
    \left(
      \frac{c}{2}
    \right)^q = \Theta(1).
  \end{equation}

    For $q\geq 2$, we have that
  \begin{equation}
    \label{eq:L_equal}
    \begin{split}
      \sum_{(i_1,\ldots, i_q) \in L_q^=}
      \prod_{j=1}^q \binom{d_{i_j}'}{2}\cdot\frac{1}{(2M)^q}
      &\leq
      q!
      \cdot
      \sum_{(i_1,\ldots, i_{q-1}) \in L_{q-1}}
      \binom{d_{i_1}'}{2} \prod_{j=1}^{q-1} 
      \binom{d_{i_j}'}{2}\cdot\frac{1}{(2M)^q}
      \\
      &\leq
      q! \frac{\Delta^2}{4M} 
      \sum_{(i_1,\ldots, i_{q-1}) \in L_{q-1}}
      \prod_{j=1}^{q-1} \binom{d_{i_j}'}{2}\cdot\frac{1}{(2M)^{q-1}}
      \\
      &\sim
      q!\frac{\Delta^2}{4M} 
      \left(\frac{c}{2}\right)^{q-1}
      =
      o(1).
    \end{split}
  \end{equation}

  Note that for $q=1$, we have $L_q = L_q^{\neq}$ and we are done
  by~\eqref{eq:Lk}. So suppose $q\geq 2$. Then $L_q$ is the disjoint
  union of $L_q^{\neq}$ and $L_q^=$. Thus,  using~\eqn{eq:Lk} and
  \eqn{eq:L_equal},
  \begin{align}
    \sum_{(i_1,\ldots, i_q) \in L_q^{\neq}}
    \prod_{j=1}^q &\binom{d_{i_j}'}{2}\cdot\frac{1}{(2M)^q}
    =
    \notag\\&=
    \sum_{(i_1,\ldots, i_q) \in L_q}
    \prod_{j=1}^q \binom{d_{i_j}'}{2}\cdot\frac{1}{(2M)^q}
    -
    \sum_{(i_1,\ldots, i_q) \in L_q^=}
    \prod_{j=1}^q \binom{d_{i_j}'}{2}\cdot\frac{1}{(2M)^q}
    \notag\\
    &=
    \sum_{(i_1,\ldots, i_q) \in L_q}
    \prod_{j=1}^q \binom{d_{i_j}'}{2}\cdot\frac{1}{(2M)^q}
    +o(1)
    \sim
    \left(\frac{c}{2}\right)^q.
    \tag*{\raisebox{-6pt}{\qedhere}}
  \end{align}
\end{proof}


\section{The case $c\to\infty$}

Recall that $n$ and $m$ are such that $m = O(n\log n)$, $m > n$ and
$m/n\to \infty$. The set $\Dcal(n,m)$ contains all degree sequences
$\ds$ such that $\sum_{i=1}^n d_i = 2m$ and $d_i \geq 2$ for all $i\in
[n]$.

Recall that $U(\ds)$ is the probability of obtaining a simple graph
using the pairing model with degree sequence $\ds$, and $U'(\ds)$ is
defined similarly, for the event that it is additionally
$2$-connected.

Let $0 < \eps < 0.01$ be a constant, and let 
\begin{equation*}
  \tilde\Dcal
  \eqdef
  \set{
    \ds\in\Dcal(n,m)\st
    \max d_i \leq n^{\eps}
  }
  \quad\text{and}\quad
  \tilde\Dcal^c\eqdef   \Dcal(n,m)\setminus \tilde\Dcal.
\end{equation*}

By~\cite[Theorem 12.2(iii)]{Luczak}, \bel{UsimUprime} U(\ds) \sim
U'(\ds).  \ee when $\ds$ is in $\Dcal(n,m)$ and satisfies $
D_2(\ds)/m\to 0$ and $\max_i d_i \leq n^{0.01}$. The condition on
$D_2$ is satisfied by all $\ds$ of concern when $n$ is large since
$D_2(\ds)\leq n$ and $c\to\infty$. Thus~\eqn{UsimUprime} holds for any
sequence $\ds(n)$ with $\ds\in\tilde\Dcal$ and $m/n\to\infty$ where
$m=\frac12 \sum_{i=1}^n d_i$.

It is known \cite{McKay} that \bel{mckay} U(\ds)\sim\exp\left( -
  \eta(\ds)/2 - \eta(\ds)^2/4.  \right) \ee This result, together with
\eqn{eqn:pairingd}, proves Theorem~\ref{thm:maindegrees}(c).

If all degree sequences were in $\tilde\Dcal$, we could immediately
deduce Theorem~\ref{thm:maincases}(c) from~\eqn{UsimUprime}. So it
remains to show that the other degree sequences have no effect
asymptotically. We need to randomize $\ds$ with the distribution of
the vector $\Ys$ of independent truncated Poissons as defined
in Section~\ref{ss:models}, we have
\begin{eqnarray}
  \esp[\Big]{U'(\Ys)| \Sigma} 
  &=&
  \esp[\Big]{U'(\Ys)| \tilde\Dcal}
  \prob{\tilde\Dcal|\Sigma} 
  + 
  \esp[\Big]{U'(\Ys)| \tilde\Dcal^c}
  \prob{\tilde\Dcal^c|\Sigma} \nonumber
  \\
  &=& \esp[\Big]{U(\Ys)|
    \tilde\Dcal}(1+o(1))\prob{\tilde\Dcal|\Sigma} 
  +
  O(\prob{\tilde\Dcal^c|\Sigma}) \lab{twoparts}
\end{eqnarray}
by~\eqn{UsimUprime}. Properties of $\Ys$ were investigated by Pittel
and Wormald, and in particular~\cite[Eq. (27)]{PWa} implies for any
$\beta>0$
\begin{equation*}
  \prob{\max_j Y_j \geq m^\beta}
  \leq
  \exp(-n^{\alpha}) 
\end{equation*}
for some fixed $\alpha(\beta)$.  This shows that
$\prob{\tilde\Dcal^c|\Sigma}=O(\exp(-n^{\alpha}))$ for some fixed
positive $\alpha$.  Also,~\cite[Theorem 4(b) and (21)]{PWa} give
\begin{equation*}
  \esp[\Big]{\exp( -\eta(\Ys)/2 -\eta(\Ys)^2/4) |\Sigma}
  \geq
  \exp(-O(\log^2 n)).
\end{equation*}
Using~\eqn{mckay} and the bound on $\prob{\tilde\Dcal^c|\Sigma}$, we
may now deduce that the first term in~\eqn{twoparts} dominates the
second, and thus
$$
\esp[\Big]{U'(\Ys)| \Sigma} 
\sim \esp[\Big]{U(\Ys)|
  \tilde\Dcal}.
$$

Similarly, 
$$
\esp[\Big]{U(\Ys)| \Sigma} 
= 
\esp[\Big]{U(\Ys)|
  \tilde\Dcal}\prob{\tilde\Dcal|\Sigma} 
+
O(\prob{\tilde\Dcal^c|\Sigma})
\sim
\esp[\Big]{U(\Ys)|
  \tilde\Dcal}
$$
and so
\begin{equation}
  \label{eqn:equivexp}
  \esp[\Big]{U'(\Ys)| \Sigma}\sim  \esp[\Big]{U(\Ys)| \Sigma}.
\end{equation}

By Theorem~3 (\cite{PWa}) and equation~(13) (\cite{PWa}),
\begin{equation*}
  \cnm \sim
  (2m-1)!! Q(n,m)
  \esp[\big]{U(\Ys)| \Sigma}
  \sim
  (2m-1)!! Q(n,m) \exp\left( -\bar{\eta}_c/2 - \bar{\eta}_c^2/4 \right).
\end{equation*}

So by~\eqn{eqn:pairingm} and~\eqn{eqn:Qnmdef}, 
\begin{equation}
  \lab{eqn:case2conclusion}
  \tcnm \sim \cnm \sim
  (2m-1)!!
  \frac{(e^{\lambda_c} - 1 - \lambda_c)^n}
  {\lambda_c^{2m}\sqrt{2\pi n c (1 + \bar\eta_c - c)}}
  \exp\left( -\bar{\eta}_c/2 - \bar{\eta}_c^2/4 \right).
\end{equation}
Since $c\to \infty$, we have that $\lambda_c\sim c$ (see Lemma 1(c)
from~\cite{PWa}). This implies that $\bar\eta_c = \lambda_c
e^{\lambda_c} / (e^\lambda_c - 1) \sim c$. This fact together
with~\eqn{eqn:case2conclusion} implies Theorem~\ref{thm:maincases}(c).


\section{Proof of Theorem~\ref{thm:main}}
\label{s:together}

Note that we have already proved Theorem~\ref{thm:maincases}. If we
prove that in each of the three cases in~Theorem~\ref{thm:maincases}
\begin{equation*}
  \tcnm \sim (2m-1)!!
  \frac{(\exp(\lambda_c)-1-\lambda_c)^n}
  {\lambda_c^{2m}\sqrt{2\pi n c(1 + \bar{\eta}_c-c)}} 
  \sqrt{\frac{c-2p_c}{c}} 
  \exp\left(-\frac{c}{2} -\frac{\lambda_c^2}{4} \right) 
\end{equation*}
then the subsubsequence principle easily implies
Theorem~\ref{thm:main}. (See~\cite{JLR} (Section 1.2) for the
subsubsequence principle.)

It suffices to show
\begin{equation*}
  \sqrt{\frac{3r}{2m}} \frac{1}{e}
  \sim 
  \sqrt{\frac{c-2p_c}{c}} \exp\left(
    -\frac{c}{2} - \frac{\lambda_c^2}{4} \right),
  \text{ when }c\to 2
\end{equation*}
and
\begin{equation*}
  \exp\left( -\frac{\bar{\eta}_c}{2}
    -\frac{\bar{\eta}_c^2}{4} \right)
  \sim 
  \sqrt{\frac{c-2p_c}{c}} \exp\left(
    -\frac{c}{2} - \frac{\lambda_c^2}{4} \right),
  \text{ when }c\to \infty.
\end{equation*}
(See~\eqn{csmall} and~\eqn{eqn:case2conclusion}.)

So suppose $c\to 2$. Using Lemma 1 from~\cite{PWa}, $\lambda_c =
3(c-2) + O((c-2)^2) = o(1)$. Thus,
\begin{equation*}
  \exp\left(-\frac{c}{2}-\frac{\lambda_c^2}{4}\right)
  \sim
  \exp\left(-\frac{c}{2}\right)
  \sim
  \frac{1}{e}.
\end{equation*}
By series expansion,
 $ p_c = 1 - \frac{1}{3}\cdot(3(c-2)) + O((c-2)^2)
  = 3-c + O\left( \frac{r^2}{n^2}\right).
$
Using $c = 2m/n = 2+r/n$,
\begin{equation*}
  \begin{split}
    \sqrt{\frac{c-2p_c}{c}}
    &=
    \sqrt{\frac{c-6+2c + O(r^2/n^2)}{c}}
    =
    \sqrt{\frac{3(2+r/n)-6 + O(r^2/n^2)}{2+r/n}}
\sim   \sqrt{\frac{3r}{2m}}.
  \end{split}
\end{equation*}

Now suppose $c\to \infty$.  In this case $\lambda_c\sim c$ (see
Lemma 1(c) from~\cite{PWa}). 
From the definition of $\lambda_c$ we have 
$c = \lambda_c + O(\lambda_c^2 e^{-\lambda_c})$.
Also,
\begin{equation*}
  \bar \eta_c = \lambda_c \cdot \frac{e^{\lambda_c}}{e^{\lambda_c}-1}= 
  \lambda_c + O(\lambda_c e^{-\lambda_c})
  \quad\text{and}\quad
  p_c = \frac{\lambda_c^2}{2(e^{\lambda_c}-1-\lambda_c)}\to 0.
\end{equation*}
This implies
\begin{equation*}
  \sqrt{\frac{c - 2p_c}{c}} \sim 1
  \quad
  \text{and}\quad
  \exp\left(-\frac{\bar\eta_c}{2}-\frac{\bar \eta_c^2}{4}\right)
  \sim
  \exp\left(-\frac{c}{2}-\frac{\lambda_c^2}{4}\right).
\end{equation*}
We now have Theorem~\ref{thm:main}.

\section{Enumeration of $k$-edge-connected graphs}
In the introduction we observed that for $k\ge 3$ and for $m$ under consideration, almost all $k$-cores on $n$ vertices and
$m$ edges are $k$-connected, so it follows that almost all are also $k$-edge-connected. This settles the enumeration of $k$-edge-connected $(n,m)$-graphs for fixed $k\ge 3$. When $k=2$ we have the following result.
\begin{thm}
  Suppose $m = O(n\log n)$ and $ m- n \to \infty$. Then the number of
\tec{}
$(n,m)$-graphs is asymptotic to
  \[  (2m-1)!!
  \frac{(\exp(\lambda_c)-1-\lambda_c)^n}
  {\lambda_c^{2m}\sqrt{2\pi n c(1 + \bar{\eta}_c-c)}} 
  \sqrt{\frac{c-2p_c}{c}} 
  \exp\left(-\frac{c}{2} -\frac{\lambda_c^2}{4} 
            +\frac{\lambda_c^3}{2(e^{\lambda_c} - 1)^2}
  \right).
  \]
\end{thm}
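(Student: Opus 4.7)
The plan is to mirror the proof of Theorem~\ref{thm:main} in each of the three regimes $c\to 2$, bounded $c>2$, and $c\to\infty$, and combine via the subsubsequence principle as in Section~\ref{s:together}. The structural novelty, replacing Proposition~\ref{prop:kernelequiv}, is that pre-kernel 2-edge-connectedness (unlike 2-connectedness) is not obstructed by kernel loops at vertices of degree at least $4$: such a loop, once subdivided by a degree-2 vertex, becomes a cycle attached at $v$, and the remaining $\geq 2$ kernel-edges at $v$ keep it from being separated from the main structure. Only kernel loops at degree-3 vertices remain obstructive, since they leave $v$'s unique non-loop edge as a bridge; by \Luczak's Lemma~12.1(ii), already invoked in Section~\ref{sec:case1a}, these are \aas{} the only obstructions.

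Two of the three regimes require virtually no new work. In the regime $c\to 2$ the extra correction $\exp(\lambda_c^3/(2(e^{\lambda_c}-1)^2))$ tends to $1$ (since $e^{\lambda_c}-1\sim\lambda_c$), so the 2-edge-connected formula coincides asymptotically with Theorem~\ref{thm:maincases}(a); indeed the $1/e$ factor there came directly from \Luczak's $\exp(-3D_3/(2m'))$ which is already the probability of kernel 2-edge-connectivity, so no new adjustment is needed. In the regime $c\to\infty$, the proof of Section~6 transfers verbatim: \eqn{UsimUprime} (from \cite[Theorem~12.2(iii)]{Luczak}) applies equally to the 2-edge-connected event, and the correction tends to $1$ because $\lambda_c/(e^{\lambda_c}-1)\to 0$.

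The substantive case is bounded $c>2$; adapt Section~\ref{sec:probcase1a}. Replace the event $A$ there by the event $A^{\mathrm{ec}}$ that (i) no kernel loop occurs at a degree-3 vertex, (ii) every kernel loop at a degree-$\geq 4$ vertex receives at least one degree-2 vertex, and (iii) no double kernel edge has zero degree-2 vertices placed on both of its edges. Together with \Luczak's lemma this event coincides \aas{} with the pre-kernel being simple and 2-edge-connected. The three indicator families counting violations of (i), (ii), (iii) are disjoint, and by the factorial-moment computation of Lemma~\ref{lem:fixed_k} combined with the placement probability \eqn{delta}, they converge jointly to independent Poissons; hence $\prob{A^{\mathrm{ec}}}$ is the exponential of minus the sum of the three Poisson means, which after algebraic simplification using $c=\lambda_c(e^{\lambda_c}-1)/(e^{\lambda_c}-1-\lambda_c)$ and $c-2p_c=\lambda_c$ gives the stated exponent $-c/2-\lambda_c^2/4+\lambda_c^3/(2(e^{\lambda_c}-1)^2)$. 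The rest (concentration of $D_2$, $\eta$, and the maximum degree; summing over $\tilde\Dcal(\psi)$; substitution into \eqn{eqn:kcmm}) transfers verbatim from Section~\ref{sec:case1a}. The main obstacle is the algebraic bookkeeping: cleanly separating the three disjoint Poisson families (in particular handling the overlap between the ``degree-3 loop'' and ``loop with zero vertices placed'' indicator conditions without double counting) and simplifying the resulting Poisson mean to match the stated closed form.
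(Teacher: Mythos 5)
Your overall architecture is the one the paper itself uses: dispose of the regimes $c\to 2$ and $c\to\infty$ by noting that the extra factor $\exp(\lambda_c^3/(2(e^{\lambda_c}-1)^2))$ tends to $1$ there and that the existing arguments already deliver 2-edge-connectivity (the $1/e$ in case (a) coming from \Luczak's $\exp(-3D_3/(2m'))$, and a.a.s.\ 2-connectivity in case (c)); then, for bounded $c$ bounded away from $2$, rerun the method-of-moments computation of Section~\ref{sec:probcase1a} with an enlarged family of bad events identified via \Luczak's Lemma~12.1(ii), and combine by the subsubsequence principle. All of that matches the paper.

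There is, however, a genuine error in your definition of $A^{\mathrm{ec}}$. Your condition (ii) asks that every kernel loop at a vertex of degree at least $4$ receive \emph{at least one} degree-2 vertex; but a loop that receives exactly one degree-2 vertex becomes a pair of parallel edges in $G$, so $G$ is not simple. The correct requirement is \emph{at least two} degree-2 vertices. This is not a cosmetic slip: the expected number of kernel loops at degree-$\geq 4$ vertices receiving exactly one degree-2 vertex is asymptotic to $\frac{\lambda_c}{2}\sqrt{\delta}\,(1-\sqrt{\delta})=\frac{\lambda_c^2(e^{\lambda_c}-1-\lambda_c)}{2(e^{\lambda_c}-1)^2}$, which is bounded away from $0$ in this regime, so your event does not coincide \aas{} with ``simple and \tec''. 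Concretely, with your (ii) the placement probability entering the third family is $\sqrt{\delta}$ rather than $2\sqrt{\delta}-\delta$, and the total Poisson mean comes out as $\frac{c}{2}+\frac{\lambda_c^2}{4}-\frac{\lambda_c^2}{2(e^{\lambda_c}-1)}$ instead of $\frac{c}{2}+\frac{\lambda_c^2}{4}-\frac{\lambda_c^3}{2(e^{\lambda_c}-1)^2}$; the ``algebraic simplification'' you assert does not in fact reach the stated exponent. With the corrected condition one gets, as in the paper, a mean of $\frac{\lambda_c}{2}(2\sqrt{\delta}-\delta)=\frac{\lambda_c}{2}-\frac{\lambda_c^3}{2(e^{\lambda_c}-1)^2}$ for the loops at degree-$\geq 4$ vertices, which together with $\frac{c}{2}-\frac{\lambda_c}{2}$ for the loops at degree-3 vertices and $\frac{\lambda_c^2}{4}$ for the bare double kernel edges yields the stated formula. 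The remainder of your outline (disjointness of the three families, the higher-moment and concentration steps, and the substitution into~\eqn{eqn:kcmm}) does transfer as you describe.
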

\begin{proof}
 The result is established by adapting the methods used for 2-connected graphs, so we omit unimportant details.

For
$c\to\infty$ we have shown that almost all $(n,m)$-graphs are \tc{}, hence the asymptotic formula for the \tc{} graphs also holds for the \tec{} ones.

For $c\to 2$ our proof actually showed that the probability of a \tec{} simple graph in the kernel configuration model is asymptotic to the probability of a \tc{} simple graph. So, once again, the asymptotic formula for the \tc{} graphs   also holds for the \tec{} graphs.

When $c$ is bounded away from $2$ and bounded, the situation is more interesting.
For $2$-connectivity, the key computation used the method of moments to deduce the Poisson distribution of the number of loops in the kernel plus the number of
double edges in the pseudograph. (See Section~\ref{sec:probcase1a}.) 
 Note that, using~\cite[Lemma~12.1(ii)]{Luczak} as we did in Section 5,  the graph $G$ will a.a.s.\ be  $2$-edge-connected if it has no loops or multiple edges and no cycle in the kernel on a vertex of degree 3. 
Thus, in the present case we must study the random variable $X+Y+Z$, where $X$ counts loops on vertices of degree 3 in the kernel,  $Y$ counts double edges in the kernel  which are assigned no vertices of degree 2, and $Z$ counts loops in the kernel at vertices of degree at least 4 which are not assigned at least 2 degree-2 vertices. Analogous arguments
establish the Poisson distribution of $X+Y+Z$. We discuss only the computation of
the first moment here.

There are three ways to attach a loop to each of the kernel's $D_3$ vertices of degree 3. Analogous to the condition on $D_2(\ds) - p_c n$ in the definition of $ {\tilde\Dcal}(\psi)
$ in Section~\ref{sec:case1a}, we can assume for the crucial computations that  $D_3\sim p_3n$,
where $p_3$ is the probability that
a truncated Poisson $\tpoisson{2}{\lambda_c}$ takes the value 3. 
Each possible loop occurs with probability $1/(2M)$, giving  
$ \esp{X} = D_3 /(2M) \sim c/2 - \lambda_c/2$. 

From Section~\ref{sec:probcase1a} we have $\esp{Y} \sim \lambda_c^2/4$. To
compute $\esp{Z}$ we must first estimate
the probability that a given kernel edge is not assigned at least two degree-2 vertices.
The number of assignments of the $D_2$ degree-2 vertices to the $M$ kernel edges is the rising factorial $[M]^{D_2}$.
Either the given kernel edge is assigned no vertices, which has probability
\[
\frac{ [M-1]^{D_2} }{ [M]^{D_2} } 
= \frac{ M-1 }{ m-1 }
\sim \sqrt{\delta},
\]
or the edge is assigned exactly one vertex, which has probability
\[
D_2 \frac{ [M-1]^{D_2-1} }{ [M]^{D_2} } 
= D_2 \frac{ M-1 }{ (m-2)(m-1) }
\sim (1-\sqrt{\delta})\sqrt{\delta}
\]
since the degree-2 vertex may be chosen in $D_2$ ways. The sum of these two probabilities is $2\sqrt{\delta}-\delta$. 
The number of ways to attach a loop among the vertices of degree at least 4 is $\sum_{i=1}^{n'}\binom{d_i'}{2}-3D_3$. Each occurs with probability $1/(2M)$.
Using Lemma~\ref{lem:fixed_k} we have 
\[
\esp{Z} = \frac{\sum_{i=1}^{n'}\binom{d_i'}{2}-3D_3}{2M}(2\sqrt{\delta}-\delta)
\sim \left(\frac{c}{2}-\frac{3D_3}{2M}\right)(2\sqrt{\delta}-\delta)
\sim \frac{\lambda_c}{2} - \frac{\lambda_c^3}{2(e^{\lambda_c} - 1)^2}.
\]
The probability that $G$ is \tec{} and simple  is thus 
\[
  \exp\left(-\frac{c}{2} -\frac{\lambda_c^2}{4} 
            +\frac{\lambda_c^3}{2(e^{\lambda_c} - 1)^2}
  \right),
\]
and the formula for the number of \tec{} graphs follows as in the \tc{} case. This concludes the proof of the theorem.
\end{proof}
\smallskip

\noindent {\bf Note:}
The alert reader will notice that an alternative way to derive this
result would be to take {\L}uczak's corollary at the end of
Section~12.5 in~\cite{Luczak}, which gives the probability of
$2$-edge-connectedness of graphs with a given degree sequence, and
then use our argument to extend this to graphs with minimum degree
2. The resulting formula agrees with ours if one corrects the formulae
in Theorem~12.4 of his paper, and its Corollary, to let $D_3/M'\to c$
(not $D_3/M$) in his notation.  (We believe the source of this problem
is in the first displayed equation in the proof of~\cite[Theorem
12.4]{Luczak}. The correct definition of $c$ appears just after this
equation.)

\medskip

\noindent {\bf Acknowledgement\ } This work is based on the
foundational work of Boris Pittel with the third author on counting
graphs with given minimum degree, and we gratefully acknowledge
discussions with Boris on the initial stages of the present work.


\providecommand{\bysame}{\leavevmode\hbox to3em{\hrulefill}\thinspace}

\end{document}